\newtheorem{introthm}{Theorem}
\newtheorem{theorem}{Theorem}[section]
\newtheorem{lemma}[theorem]{Lemma}
\newtheorem{proposition}[theorem]{Proposition}
\theoremstyle{definition}
\newtheorem{notation}[theorem]{Notation}
\newtheorem{definition}[theorem]{Definition}
\newtheorem{question}[theorem]{Question}
\DeclareMathOperator{\rd}{res}
\DeclareMathOperator{\pp}{\mathbb P}
\DeclareMathOperator{\Cl}{Cl}
\DeclareMathOperator{\Eff}{Eff}
\DeclareMathOperator{\Num}{N}
\DeclareMathOperator{\Nef}{Nef}
\DeclareMathOperator{\barEff}{\overline{\hbox{\rm Eff}}}
\DeclareMathOperator{\Pic}{Pic}
\newcommand{\bQ}{\mathbb Q}
\newcommand{\bR}{\mathbb R}
\newcommand{\bZ}{\mathbb Z}
\newtheorem{remark}[theorem]{Remark}
\theoremstyle{remark}
\title{Effective cone of the blow up of
the symmetric product of a curve}
\author[A.~Laface]{Antonio Laface}
\address{
Departamento de Matem\'atica,
Universidad de Concepci\'on,
Casilla 160-C,
Concepci\'on, Chile}
\email{alaface@udec.cl}
\author[L.~Ugaglia]{Luca Ugaglia}
\address{
Dipartimento di Matematica e Informatica,
Universit\`a degli studi di Palermo,
Via Archirafi 34,
90123 Palermo, Italy}
\email{luca.ugaglia@unipa.it}
\subjclass[2010]{Primary 14M25; Secondary 14C20}
\thanks{Both authors have been partially supported by Proyecto
FONDECYT Regular n. 1190777}
\date{\today}
\begin{document}
\begin{abstract}
Let $C$ be a smooth curve of genus $g \geq 1$
and let $C^{(2)}$ be its second symmetric 
product. In this note we prove that if $C$
is very general, then the blow-up of 
$C^{(2)}$ at a very general point has 
non-polyhedral pseudo-effective cone.
The strategy is to consider first the case of 
hyperelliptic curves and then to show 
that having polyhedral pseudo-effective cone 
is a closed property for families of surfaces.
\end{abstract}

\maketitle


\section*{Introduction}
The study of the effective cone of the 
blow up $\tilde S$ of a  projective surface $S$ at
a smooth point $x\in S$ is connected with
the calculation of Seshadri constants.
Deciding when the (pseudo)effective cone
of $\tilde S$ is polyhedral is an open
problem even when $S$ is a toric surface.
For instance, if the effective cone of
the blow up of the weighted projective plane
$\pp(a,b,c)$ at a general point is not closed,
then Nagata's Conjecture holds for $abc$
points in $\pp^2$, see~\cite{CK} and 
\cites{hkl,gk,ggk,ggk1} for recent results
on blow ups of weighted projective planes.
In~\cite{CLTU} it has
been shown that there exist toric
surfaces whose blow up at a 
general point has non-polyhedral
pseudo-effective cone. This result allows one
to deduce that the pseudo-effective
cone of the Grothendieck-Knudsen moduli space $\bar M_{0,n}$ is not polyhedral
for $n\geq 10$.

In this paper we focus on the second symmetric
product $C^{(2)}$ of a positive genus 
curve $C$. In general it is
not known if the effective cone of these
surfaces is open. This would be true if 
the Nagata Conjecture holds, as shown
in~\cite{CiKo}.
Our interest is in the blow up $\tilde C^{(2)}$
at a very general point $p\oplus q\in C^{(2)}$.
\begin{introthm}
\label{thm:1}
Let $C$ be a very general
curve of genus $g\geq 1$. 
Then the blow-up of the symmetric
product $C^{(2)}$ at a very general point
has non-polyhedral pseudo-effective cone.
\end{introthm}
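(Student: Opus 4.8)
The plan is to carry out the two‑step strategy announced in the abstract. Let $\mathcal{M}$ be the irreducible parameter space of pairs $(C,x)$ with $C$ a smooth projective curve of genus $g\geq 1$ and $x\in C^{(2)}$ a point, and let $\mathcal{X}\to\mathcal{M}$ be the family whose fibre over $(C,x)$ is $\tilde C^{(2)}=\Bl_x C^{(2)}$. Set $\mathcal{P}=\{(C,x)\in\mathcal{M}:\barEff(\tilde C^{(2)})\text{ is polyhedral}\}$. I would deduce Theorem~\ref{thm:1} from two assertions: first, that $\mathcal{P}$ is closed in $\mathcal{M}$ (this is the ``closed property'' referred to in the abstract); second, that $\mathcal{P}\neq\mathcal{M}$, which I would establish by producing a single hyperelliptic curve $C$ (automatic if $g\le 2$) together with a point $x\in C^{(2)}$ for which $\barEff(\tilde C^{(2)})$ is non‑polyhedral. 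Granting these, $\mathcal{M}\setminus\mathcal{P}$ is a non‑empty open subset of the irreducible variety $\mathcal{M}$, hence dense, so the very general pair lies in $\mathcal{M}\setminus\mathcal{P}$; this is exactly the statement of the theorem.

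For the hyperelliptic case I would use the Abel--Jacobi morphism $a\colon C^{(2)}\to W_2\subseteq \operatorname{Jac}(C)$. When $C$ is hyperelliptic, $a$ contracts the smooth rational curve $R=\{p+\iota(p):p\in C\}$, of self‑intersection $1-g$, to a point of $W_2$ (its unique singular point when $g\ge 3$; when $g=2$ one has $W_2=\operatorname{Jac}(C)$ and $a$ is just the blow‑down of $R$), and is an isomorphism elsewhere. The idea is then to specialise $C$ further, to a hyperelliptic curve whose Jacobian has large Picard number --- for instance isogenous to a self‑product of an elliptic curve with complex multiplication --- so that the pseudo‑effective cone of divisors on $\operatorname{Jac}(C)$ is a round, hence non‑polyhedral, cone. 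One must transfer this to $\tilde C^{(2)}$: restrict the boundary rays of that round cone to the surface $W_2$, pull them back along $a$ and along the blow‑up $\tilde C^{(2)}\to C^{(2)}$, and check that the resulting infinitely many classes span extremal rays of $\barEff(\tilde C^{(2)})$. Here the key point is that each such class $D$ is nef with $D^{2}=0$, so the face $D^{\perp}\cap\barEff(\tilde C^{(2)})$ it cuts out is low‑dimensional --- one verifies this by pushing forward to $W_2$ and invoking the Hodge index theorem, after accounting for the finitely many negative curves of $W_2$ --- and an extremal ray of a face of $\barEff$ is an extremal ray of $\barEff$. For $g=1$, $C^{(2)}$ is a ruled surface over the elliptic curve $C$ and $\tilde C^{(2)}$ has to be treated by a separate, more elementary analysis, e.g.\ using the effective anticanonical class together with the pencils of curves $\{p\}+C$.

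For the closedness of $\mathcal{P}$ I would argue by semicontinuity. If $\mathcal{Y}\to B$ is a smooth family of surfaces over an irreducible base, flat limits of effective divisors are effective, so for $b$ specialising to $b_0$ (and after stratifying $B$ so that the relevant N\'eron--Severi groups are locally constant) one gets an inclusion $\barEff(\mathcal{Y}_b)\subseteq\barEff(\mathcal{Y}_{b_0})$. This inclusion by itself is not enough, since a subcone of a polyhedral cone need not be polyhedral; one has to combine it with a bound, uniform along the family, on the number of extremal rays of $\barEff(\mathcal{Y}_b)$ when this cone is polyhedral --- concretely a bound on the degree of the negative curves that generate it --- in order to pass to the limit and conclude that $\barEff(\mathcal{Y}_{b_0})$ is polyhedral as well.

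I expect the assertion ``$\mathcal{P}$ closed'' to be the principal obstacle: specialisation of cycles gives only one of the two inclusions of pseudo‑effective cones, so the real content is the uniform control of the combinatorics of $\barEff$ in the family needed to make a limiting argument go through. Within the hyperelliptic case the delicate step is the same in spirit --- ensuring that the round portion of the pseudo‑effective cone of $\operatorname{Jac}(C)$ does not collapse to a polyhedral cone when restricted to the surface $W_2$ and pulled back, which is why it is essential that the relevant boundary classes remain nef, and hence cut out honest faces of $\barEff(\tilde C^{(2)})$, rather than merely pseudo‑effective classes.
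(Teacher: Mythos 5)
Your high-level architecture (a degeneration to the hyperelliptic locus plus a closedness statement for polyhedrality in families) is the same as the paper's, but both halves have genuine gaps, and the more serious one is your choice of witness. Any semicontinuity argument of the kind you describe only operates along loci where the N\'eron--Severi lattice is constant --- this is exactly the hypothesis of Proposition~\ref{prop:eff3}, and it cannot be dispensed with: when the Picard rank jumps at a special fibre, the cone of the general fibre is only a slice of the cone of the special fibre, and non-polyhedrality of the bigger cone says nothing about the slice. Your proposed witness, a hyperelliptic curve whose Jacobian has large Picard number, sits precisely in such a jumping locus ($\rho(\tilde C^{(2)})\geq 5$ there, versus $3$ for the very general pair), so even after your own stratification caveat the non-polyhedrality established there does not propagate to the open stratum; the assertion that $\mathcal P$ is Zariski closed in all of $\mathcal M$ is unsupported and is exactly what fails at such loci. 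The paper's Theorem~\ref{thm:2} supplies a witness in the correct stratum: for a \emph{very general} hyperelliptic curve (so $\rho(C^{(2)})=2$, the same lattice as for the very general curve) and a point $p$ with $\sigma(p)-p$ non-torsion in ${\rm Pic}^0(C)$, the quotient $\tilde X=\tilde C^{(2)}/\langle\sigma\rangle$ contains an irreducible curve with self-intersection $0$ whose normal bundle is non-torsion; this spans an extremal ray and forces $\barEff(\tilde X)$, hence $\barEff(\tilde C^{(2)})$, to be non-polyhedral. This torsion obstruction is the key idea missing from your proposal. (A secondary gap in your transfer step: a nef class $D$ with $D^2=0$ need not be extremal in $\barEff$ --- e.g.\ $H-E_1$ on the blow-up of $\mathbb P^2$ at two points decomposes as $(H-E_1-E_2)+E_2$ --- so you would still have to rule out that the boundary rays you pull back decompose against negative curves.)

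On closedness: you correctly observe that the inclusion of pseudo-effective cones under specialisation is by itself insufficient, but the uniform degree bound on negative curves that you invoke to finish is neither available nor needed. The paper's mechanism is Proposition~\ref{prop:eff2}: for $\rho\geq 3$, polyhedrality of $\barEff$ is equivalent to the positive light cone $Q$ being contained in the cone spanned by finitely many irreducible negative curves. One degenerates those finitely many curves to effective (possibly reducible) curves on the special fibre; the containment of $Q$ in the cone they span is a purely lattice-theoretic statement, so it transfers verbatim under the assumed isometry of Picard lattices; and Proposition~\ref{prop:eff2} applied again on the special fibre yields polyhedrality there. This positive-cone criterion is the step your sketch is missing.
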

In order to prove the theorem we first show, in
Proposition~\ref{prop:eff3}, that
having polyhedral pseudo-effective cone 
is a closed property for families of surfaces
and then we prove the following.
\begin{introthm}
\label{thm:2}
Let $C$ be a genus $g\geq 1$ hyperelliptic 
curve with hyperelliptic involution 
$\sigma$, let $p\in C$ and let $\tilde C^{(2)}$
be the blow-up of $C^{(2)}$ at $p\oplus \sigma(p)$.
If the class of $\sigma(p) - p$ is non-torsion
in ${\rm Pic}^0(C)$ then $\overline{\rm Eff}(\tilde C^{(2)})$ is  
non-polyhedral.
\end{introthm}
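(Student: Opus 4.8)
The plan is to compute a handful of intersection numbers on $\tilde C^{(2)}$, isolate a nef class of self-intersection zero which the hypothesis prevents from being semiample, and then use this to produce infinitely many extremal rays.

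Write $\pi\colon\tilde C^{(2)}\to C^{(2)}$ for the blow-up, $E\subset\tilde C^{(2)}$ for its exceptional curve, and $x_0:=p\oplus\sigma(p)$. The point $x_0$ is very special: it lies on the rational curve $R\subset C^{(2)}$ cut out by the hyperelliptic pencil (with $R\cong\mathbb P^1$, $R^2=1-g$), and on the two curves $C_p=\{p\oplus y\mid y\in C\}$, $C_{\sigma(p)}=\{\sigma(p)\oplus y\mid y\in C\}$, each a smooth copy of $C$ of self-intersection one; a direct local computation shows $R\cap C_p\cap C_{\sigma(p)}=\{x_0\}$ with all three local intersections transverse, while $C_p\cdot C_{\sigma(p)}=C_p\cdot R=1$. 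Passing to strict transforms in $\tilde C^{(2)}$ gives
$$\tilde R^2=-g,\qquad \tilde C_p^2=\tilde C_{\sigma(p)}^2=0,\qquad \tilde C_p\cdot\tilde C_{\sigma(p)}=\tilde C_p\cdot\tilde R=0,\qquad \tilde C_p\cdot E=\tilde R\cdot E=1 .$$
Thus $\tilde R$ is already a negative curve, and $\nu:=[\tilde C_p]=\pi^*[C_p]-E$ is nef with $\nu^2=0$: for any irreducible $D\neq E$ one has $\nu\cdot D=C_p\cdot\pi(D)-\operatorname{mult}_{x_0}\pi(D)\ge0$ because $C_p$ is smooth at $x_0$, and $\nu\cdot E=1$.

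Next I would pin down where the hypothesis enters, by computing the normal bundle of $\tilde C_p\cong C$. Since $C_p$ is smooth at $x_0$, $N_{\tilde C_p/\tilde C^{(2)}}\cong N_{C_p/C^{(2)}}(-x_0)$; the one-parameter family $\{C_a\}_{a\in C}$ deforming $C_p$ degenerates exactly along $C_p\cap\Delta$, which identifies $N_{C_p/C^{(2)}}\cong\mathcal O_{C_p}(p\oplus p)$, and rewriting the points $p\oplus p$ and $x_0$ of $C_p$ as the points $p$ and $\sigma(p)$ of $C$ gives
$$N_{\tilde C_p/\tilde C^{(2)}}\;\cong\;\mathcal O_C\big(p-\sigma(p)\big),$$
which is non-torsion in $\Pic^0(C)$ by hypothesis. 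Consequently $\nu$ is nef, $\nu^2=0$, but \emph{not semiample}: restricting to $\tilde C_p$ and using $m\nu-\tilde C_p=(m-1)\nu$ one gets $h^0(\tilde C^{(2)},m\nu)=1$ for all $m\ge1$. Equivalently, $\tilde C_p$ and $\tilde C_{\sigma(p)}$ are numerically but not linearly equivalent, their difference being the non-torsion class attached to $\sigma(p)-p$.

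The main step is to upgrade this to the non-polyhedrality of $\overline{\rm Eff}(\tilde C^{(2)})$. Since an irreducible curve of negative self-intersection spans an extremal ray of the pseudo-effective cone and distinct numerical classes span distinct rays, it suffices to exhibit infinitely many negative curves in pairwise distinct classes. I would do this through the contraction of $\tilde R$. When $g=1$, $\tilde R$ is a $(-1)$-curve and contracting it exhibits $\tilde C^{(2)}$ as the blow-up, at one point of a fibre lying on neither of the two special sections, of the ruled surface $\mathbb P(\mathcal O_B\oplus L)$ over $B=\Pic^2(C)$, where $L\in\Pic^0(B)$ is the non-torsion line bundle attached to $\sigma(p)-p$; I would then construct inductively a sequence of $(-1)$-curves as strict transforms of irreducible multisections carrying a point of multiplicity $m$ at the centre of the blow-up, of unbounded degree, the non-torsion of $L$ being exactly what keeps the relevant linear systems non-empty and their members irreducible. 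For $g\ge2$ the same scheme applies after contracting $\tilde R$ (now of self-intersection $-g$) to a cyclic quotient singularity and working on the resulting surface. In every case, infinitely many negative curves in distinct classes force $\overline{\rm Eff}(\tilde C^{(2)})$ to be non-polyhedral.

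\medskip
The delicate part is exactly this last step — manufacturing \emph{infinitely many} extremal curves out of the single non-torsion invariant $N_{\tilde C_p/\tilde C^{(2)}}$. One has to (i) prove a non-vacuous existence statement for irreducible curves with a prescribed high-multiplicity point, a genuine count on linear systems on a ruled (or mildly singular) surface, and (ii) rule out degeneration: the curves must stay irreducible and their classes must not collapse, which is precisely the point at which the argument breaks down when $\sigma(p)-p$ is torsion.
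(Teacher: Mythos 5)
Your setup is sound and, in substance, parallel to the paper's: you locate an irreducible curve of self-intersection zero on the blow-up (your $\tilde C_p$; the paper works instead with its image on the quotient $\tilde X=\tilde C^{(2)}/\langle\sigma\rangle$ and transfers back via Proposition~\ref{prop:eff1}), and you correctly compute that its normal bundle is $\mathcal O_C(p-\sigma(p))$, which is non-torsion by hypothesis, so that $h^0(m\tilde C_p)=1$ for all $m$. Up to this point the two arguments carry the same information.

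The gap is in your final step. You reduce non-polyhedrality to the existence of infinitely many negative curves on $\tilde C^{(2)}$, to be produced by an inductive construction of $(-1)$-curves after contracting $\tilde R$; you yourself flag the existence and irreducibility statements as ``delicate'' and do not prove them. This is not a repairable detail: for $g\geq 2$ the existence of infinitely many negative curves on $\tilde C^{(2)}$ is precisely the question the paper leaves open (and the authors remark that for general $C$ they do not know a single negative class), so your route would prove strictly more than the theorem and cannot be completed by the methods you indicate. It is also unnecessary. The soft argument that closes the proof is: (i) because $\mathcal O_{\tilde C_p}(\tilde C_p)$ is non-torsion, the nef class $[\tilde C_p]$ with $[\tilde C_p]^2=0$ spans an \emph{extremal} ray of $\barEff$ --- note that nefness alone does not give this, since e.g.\ $\tilde R$ is a negative curve orthogonal to $[\tilde C_p]$, and ruling out decompositions such as $[\tilde C_p]=\alpha+\epsilon[\tilde R]$ is exactly where the non-torsion restriction $\mathcal O_{\tilde C_p}(D)$ of candidate summands $D$ is used (this is \cite{CLTU}*{Proposition 2.3}); and (ii) by Proposition~\ref{prop:eff2}, a polyhedral pseudo-effective cone of a surface with $\varrho\geq 3$ is generated by classes of \emph{negative} curves, so it cannot have an extremal ray spanned by a curve of self-intersection zero. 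You should replace your last step by this two-line deduction; as written, the proposal does not prove the theorem.
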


When $C$ is an elliptic curve, its symmetric 
product is the {\em Atyiah surface}. In this case
in~\cite{Ga} it has been proved that if 
$q-p$ is non-torsion, then $\tilde C^{(2)}$ 
contains infinitely many negative 
curves. Therefore the pseudo-effective
cone of $\tilde C^{(2)}$ is not
polyhedral, and in~\cite{McG} it is
proved that the classes of the above
mentioned curves (together with other two classes)
indeed generate the pseudo-effective cone.

Our proof of Theorem~\ref{thm:2} focuses
on the quotient surface $\tilde X$ by the
action of the hyperelliptic involution on 
both factors. 
We show that there is an irreducible curve
$B$ on $\tilde X$ having self intersection
$B^2 = 0$, whose class spans an extremal
ray of the pseudo-effective cone of 
$\tilde X$, so that
the latter cannot be polyhedral
by~\cite{CLTU}*{Proposition 2.3}. 
We then apply Proposition~\ref{prop:eff1}
to the double cover  
$\tilde C^{(2)} \to \tilde X$
to conclude that the pseudo-effective
cone of $\tilde C^{(2)}$ is not
polyhedral.


The paper is structured as follows. In 
Section~\ref{sec:pre} we recall some
definitions and we prove some preliminary
results about the effective cone of 
projective surfaces. In Section~\ref{sec:sym}
we study the symmetric product $C^{(2)}$
of a curve, with particular emphasis on the
case $C$ hyperelliptic.
Section~\ref{sec:proof} is devoted to the
proof of Theorem~\ref{thm:1} and~\ref{thm:2},
while in Section~\ref{sec:ell} we prove
some results in case $g(C) = 1$.\\

{\bf Ackowledgements.}
It is a pleasure to thank Jenia Tevelev 
for several interesting discussions on
the subject of this paper.

\section{Preliminaries}
\label{sec:pre}
Let $k$ be an algebraically closed field of arbitrary characteristic. 
We recall some definitions (see for example \cites{LazI, LazII}). 
If  $X$ is a normal projective irreducible variety over $k$, let $\Cl(X)$ be the divisor class group and let 
$\Pic(X)$ be the Picard group of $X$. As usual, we denote by $\sim$ the linear equivalence of divisors
and by $\equiv$ the numerical equivalence.
Recall that for  Cartier divisors $D_1$, $D_2$, we have
 $D_1\equiv D_2$ if and only if $D_1\cdot C=D_2\cdot C$, for any curve $C\subseteq X$. 
We let
\[
\Num^1(X):=\Pic(X)/\equiv
\]
be the {\em N\'eron-Severi group},
i.e. the group of numerical
equivalence classes of Cartier divisors on $X$. 
We denote by $\rho(X)$ the
rank of $\Num^1(X)$ and by
$\Num^1(X)_{\bR}=\Num^1(X)\otimes_{\bZ}\bR$, $\Num^1(X)_{\bQ}=\Num^1(X)\otimes_{\bZ}\bQ$. 
We define the pseudo-effective cone
 \[
 \barEff(X)\subseteq \Num^1(X)_{\bR},
 \]
as the closure of the effective cone $\Eff(X)$, i.e., the convex cone generated by numerical 
classes of effective Cartier divisors (\cite{LazII}*{Def.~2.2.25}). We let 
$\Nef(X)\subseteq \Num^1(X)_\bR$ be the
cone generated by the classes of {\em nef divisors}.  


\begin{proposition}
\label{prop:eff1}
Let $f\colon X\to Y$ be a finite surjective 
morphism of normal $\mathbb Q$-factorial projective
varieties. If $\varrho(X) = \varrho(Y)$, 
then $f_*\colon {\rm N}^1(X)_{\mathbb R}\to
{\rm N}^1(Y)_{\mathbb R}$ is an isomorphism
such that $f_*({\rm Eff}(X)) = {\rm Eff}(Y)$.
\end{proposition}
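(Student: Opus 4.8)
The plan is to analyze the pushforward and pullback maps on Néron–Severi groups attached to the finite surjective morphism $f\colon X\to Y$ and to exploit the hypothesis on Picard numbers. First I would recall the projection formula: for a Cartier divisor $D$ on $X$ and a curve $C$ on $X$ one has $f_*D\cdot f_*C' $-type identities, but more usefully, for $D$ on $Y$, $f^*D\cdot C = D\cdot f_*C$ for any curve $C\subseteq X$, and $f_*f^*D = (\deg f)\,D$. Since $f$ is finite surjective and both varieties are $\mathbb Q$-factorial, $f^*$ is well-defined on $\mathrm{N}^1(\;)_{\mathbb R}$ after clearing denominators, and $f_*$ is well-defined on classes (a prime divisor maps to $(\deg$ of the restricted map$)$ times its image, or $0$). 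The composition $f_*\circ f^* = (\deg f)\cdot \mathrm{Id}$ on $\mathrm{N}^1(Y)_{\mathbb R}$ shows $f^*$ is injective; hence $\dim \mathrm{N}^1(Y)_{\mathbb R}\le \dim \mathrm{N}^1(X)_{\mathbb R}$, i.e. $\varrho(Y)\le\varrho(X)$ always. The equality hypothesis $\varrho(X)=\varrho(Y)$ then forces $f^*$ to be an isomorphism, and consequently $f_* = (\deg f)\,(f^*)^{-1}$ is also an isomorphism.

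Next I would address the statement about effective cones. One inclusion is immediate: $f_*$ sends classes of effective divisors to classes of effective divisors (the scheme-theoretic image of an effective divisor, counted with multiplicity, is effective), so $f_*(\mathrm{Eff}(X))\subseteq \mathrm{Eff}(Y)$. For the reverse inclusion, take an effective divisor $E$ on $Y$; then $f^*E$ is effective on $X$, and $f_*(f^*E) = (\deg f)\,E$, so $E = f_*\bigl(\tfrac{1}{\deg f} f^*E\bigr)\in f_*(\mathrm{Eff}(X))$. This gives $\mathrm{Eff}(Y)\subseteq f_*(\mathrm{Eff}(X))$, and combining the two inclusions yields $f_*(\mathrm{Eff}(X)) = \mathrm{Eff}(Y)$.

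The step I expect to require the most care is justifying that $f^*$ and $f_*$ are genuinely defined as maps on numerical equivalence classes and that the projection formula holds in this generality — that is, for finite surjective morphisms of normal $\mathbb Q$-factorial projective varieties over an arbitrary algebraically closed field, not merely for smooth varieties over $\mathbb C$. The $\mathbb Q$-factoriality is exactly what allows pullback of divisor classes (replace a Weil divisor by a multiple that is Cartier, pull back, divide), and finiteness is what guarantees $f_*$ of a divisor is again a divisor rather than a higher-codimension cycle. I would also note that $f^*$ preserves numerical equivalence: if $D\equiv 0$ on $Y$ then $f^*D\cdot C = D\cdot f_*C = 0$ for every curve $C$ on $X$, since $f_*C$ is either a curve or zero. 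Once these formal points are in place, the argument above is routine; no further genus or surface hypotheses enter, so the proposition holds as stated.
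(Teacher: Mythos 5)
Your proposal is correct and follows essentially the same route as the paper: use $\mathbb Q$-factoriality to define $f^*$ on (a finite-index subgroup of) ${\rm N}^1(Y)$, apply the projection formula $f_*\circ f^* = (\deg f)\cdot{\rm id}$ together with $\varrho(X)=\varrho(Y)$ to conclude that $f_*$ is an isomorphism with inverse $\tfrac{1}{\deg f}f^*$, and then deduce the equality of effective cones from the two inclusions $f_*({\rm Eff}(X))\subseteq{\rm Eff}(Y)$ and $f^*({\rm Eff}(Y))\subseteq{\rm Eff}(X)$. The paper's proof is a condensed version of exactly this argument.
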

\begin{proof}
Since $Y$ is $\mathbb Q$-factorial, the image of ${\rm Pic}(Y)$ in the N\'eron-Severi group
${\rm N}^1(Y)$ has finite index. Over this
subgroup the pullback is defined and the 
projection formula gives $f_*\circ f^* = n\cdot{\rm id}$, where $n = \deg(f)$. This, together with
the hypothesis $\rho(X) = \rho(Y)$, imply that 
$f_*\colon {\rm N}^1(X)_{\mathbb R}
\to {\rm N}^1(Y)_{\mathbb R}$ is an isomorphism
whose inverse is $\frac{1}{n}f^*$.
Then one concludes by the inclusions 
\[
 f_*({\rm Eff}(X))
 \subseteq {\rm Eff}(Y)
 \text{ and }
 f^*({\rm Eff}(Y))\subseteq {\rm Eff}(X).
\]
\end{proof}

\begin{proposition}
\label{prop:eff2}
Let $X$ be a normal $\mathbb Q$-factorial 
algebraic surface with $\varrho(X)\geq 3$
and positive light cone $Q\subseteq
N^1(X)_{\mathbb R}$. Let $C_1,\dots,C_n$
be irreducible curves of $X$.
Then the following are equivalent:
\begin{enumerate}
\item
$Q\subseteq {\rm Cone}([C_1],\dots,[C_n])$;
\item
$\overline{\rm Eff}(X) = {\rm Cone}([C_1],\dots,[C_n])$.
\end{enumerate}
Moreover if $\overline{\rm Eff}(X)$ is
polyhedral
then $\overline{\rm Eff}(X) = {\rm Eff}(X)$
holds and both cones are generated by 
classes of negative curves.
\end{proposition}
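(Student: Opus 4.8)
The plan is to exploit the intersection form on a surface, which on $N^1(X)_\bR$ has signature $(1,\rho(X)-1)$ by the Hodge index theorem, so the positive light cone $Q$ is one nappe of the null cone of a Lorentzian form. First I would prove the implication $(2)\Rightarrow(1)$, which is essentially trivial: every point of $Q$ is a limit of classes of positive self-intersection, hence is pseudo-effective (a class $D$ with $D^2>0$ lying in the same nappe as an ample class is big, so effective up to numerical equivalence), and therefore $Q\subseteq\barEff(X)=\Cone([C_1],\dots,[C_n])$. The substance is in $(1)\Rightarrow(2)$. Here one inclusion, $\Cone([C_1],\dots,[C_n])\subseteq\barEff(X)$, holds because each $C_i$ is an effective curve. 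For the reverse inclusion, suppose for contradiction that there is a class $\alpha\in\barEff(X)$ outside the closed cone $\sigma:=\Cone([C_1],\dots,[C_n])$. Since $\sigma$ is a closed convex cone not containing $\alpha$, there is a separating linear functional, i.e. a class $D\in N^1(X)_\bR$ with $D\cdot\alpha<0\le D\cdot C_i$ for all $i$.

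The key step is then to analyze where such a separating divisor $D$ can lie relative to $Q$ and its closure. Because $\sigma\supseteq Q$ and $D$ is nonnegative on all of $\sigma$, $D$ is nonnegative on $Q$, hence on $\overline Q$; a class nonnegative on the closed light cone $\overline Q$ lies in $\overline Q$ itself (the null cone of a Lorentzian form is self-dual up to the choice of nappe). So $D\in\overline Q$, meaning $D^2\ge 0$ and $D$ is in the closure of the positive cone. If $D^2>0$, then $D$ is (numerically) big, hence $D\cdot\alpha\ge 0$ for every pseudo-effective $\alpha$ — contradiction. So $D^2=0$, i.e. $D$ spans a ray on the boundary $\partial Q$. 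Now I would use $\rho(X)\ge 3$: on the boundary sphere of $Q$ the curve classes $[C_i]$ that are \emph{not} proportional to $D$ pair strictly positively with $D$ (two distinct null rays of a Lorentzian form have strictly positive pairing), while those proportional to $D$ pair to zero; since $D\cdot\alpha<0$ while $\alpha\in\barEff(X)$ is a nonnegative combination of things pairing $\ge 0$ against $D$, we must have $\alpha$ supported on the rays with $D\cdot[C_i]=0$, forcing $\alpha$ itself proportional to $D$ and $D\cdot\alpha=0$, the final contradiction. (The condition $\rho\ge 3$ is what makes $\partial Q$ connected and guarantees enough room; in $\rho=2$ the statement can fail.)

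For the ``moreover'' clause, assume $\barEff(X)$ is polyhedral. Its extremal rays are then finitely many; I claim each is spanned by a class of negative self-intersection. Indeed an extremal ray $R$ with a generator $v$ satisfying $v^2\ge 0$ would lie in $\overline Q$, but $\overline Q\subseteq\barEff(X)$ and no ray of $\overline Q$ is extremal in a cone containing a full neighborhood of it within $\overline Q$ — more precisely, any $v\in\overline Q$ is a positive combination of two other classes of $\overline Q\subseteq\barEff(X)$ not proportional to it, so $R$ is not extremal. Hence every extremal ray $R_i$ has a generator $D_i$ with $D_i^2<0$; such a $D_i$ is rigid, so its unique effective representative is an irreducible negative curve $E_i$, and $[E_i]$ spans $R_i$. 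Therefore $\barEff(X)=\Cone([E_1],\dots,[E_m])$ is generated by classes of genuinely effective negative curves, and in particular $\barEff(X)=\Eff(X)$.

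The main obstacle I anticipate is the dichotomy argument on $\partial Q$: one must handle carefully the case where the separating divisor $D$ is itself a null class proportional to some $[C_i]$, and confirm that the hypothesis $\rho(X)\ge 3$ (rather than merely $\ge 2$) is genuinely used — geometrically, that $\partial Q\setminus\{\bR_{\ge0}D\}$ is connected and each of its points pairs strictly positively with $D$, which fails for $\rho=2$ where $\partial Q$ is just two rays. Getting the reduction ``nonnegative on $\barEff(X)\supseteq\overline Q$ implies $D\in\overline Q$'' cleanly, using self-duality of the Lorentzian null cone, is the other point to state with care.
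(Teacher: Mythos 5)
Your reduction to a separating functional $D\in\Cone([C_1],\dots,[C_n])^\vee\subseteq\overline Q$ is fine, but both branches of the ensuing dichotomy break down, and for the same underlying reason: at no point do you use that the $C_i$ are \emph{irreducible curves}, which is the essential input. In the case $D^2>0$ you invoke ``big implies $D\cdot\alpha\ge0$ for every pseudo-effective $\alpha$''; this is false, since big classes need not be nef. For instance on $\Bl_p\mathbb P^2$ the class $D=3H+2E$ lies in $Q$ (so it is big and nonnegative on all of $\overline Q$), yet $D\cdot E=-2$ with $E$ pseudo-effective. So knowing $D\in\overline Q$ and $D\ge0$ on $\Cone([C_i])$ does not let you conclude $D\cdot\alpha\ge0$. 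In the case $D^2=0$ the argument is circular: you write $\alpha$ as ``a nonnegative combination of things pairing $\ge 0$ against $D$'', where the ``things'' are the $[C_i]$ --- but $\alpha\in\barEff(X)\setminus\Cone([C_i])$ is precisely a class for which no such expression is available; that every pseudo-effective class is such a combination is the statement being proved. (Moreover the $[C_i]$ are in general not null classes, so the fact that two distinct null rays of the same nappe pair positively does not apply to them.)

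The mechanism that actually closes the argument, and which the paper uses, runs through extremal rays of $\barEff(X)$ rather than through a separating functional for an arbitrary $\alpha$: if $[D]$ spans an extremal ray with $D^2<0$, then $D^\perp$ meets the interior of $Q$, so some $h\in Q$ satisfies $D\cdot h<0$; writing $h=\sum a_i[C_i]$ with $a_i\ge0$ (hypothesis (1)) forces $D\cdot C_i<0$ for some $i$, and then the irreducibility of $C_i$ implies that $C_i$ occurs in the support of every effective class on the ray of $D$, whence $[D]\in\mathbb R_{\ge0}[C_i]$ by extremality; extremal rays with $D^2\ge0$ lie in $\overline Q\subseteq\Cone([C_i])$ and cause no trouble. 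Finally, in your ``moreover'' paragraph the claim that every $v\in\overline Q$ is a positive combination of two non-proportional classes of $\overline Q$ is false for null $v$: if $v=x+y$ with $x,y\in\overline Q$ and $v^2=0$, then expanding $v^2$ forces $x$ and $y$ to be null and proportional to $v$, so null rays \emph{are} extremal in $\overline Q$. The correct reason a null ray cannot be extremal in a \emph{polyhedral} cone containing $Q$ when $\varrho\ge3$ is that the only supporting hyperplane of $\overline Q$ at such a ray is $v^\perp$, so every facet of the polyhedral cone through $v$ would have to lie in this single hyperplane, which is impossible for a one-dimensional face of a cone of dimension at least $3$.
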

\begin{proof}
We prove $(1)\Rightarrow (2)$.
Let $[D]$ be a divisor class which
generates an extremal ray of the effective cone. 
Then $D^2<0$ so that the hyperplane
$D^\perp$ intersects
$Q$ along its interior.
As a consequence at least one of 
the $C_i$ satisfies $D\cdot C_i < 0$.
Thus any effective multiple of $D$ 
contains $C_i$ into its support, so 
that $[D]=[C_i]$ up to multiples.

The implication $(2)\Rightarrow (1)$
is obvious.
\end{proof}

\begin{proposition}
\label{prop:eff3}
Let $X\to B$ be a flat projective morphism
of Noetherian schemes, whose general fiber
is a normal $\mathbb Q$-factorial 
surface with Picard lattice isometric to
the one of the special fiber $X_0$ over $0\in B$.
If the general fiber has polyhedral
pseudo-effective cone, then the same holds
for the special fiber.
\end{proposition}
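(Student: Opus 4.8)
The plan is to argue by contradiction via a specialization argument. Suppose the special fiber $X_0$ does not have polyhedral pseudo-effective cone. Since $\rho(X_0)$ is finite and the Picard lattices of $X_0$ and of the general fiber $X_t$ are isometric, $\overline{\rm Eff}(X_0)$ is a non-polyhedral cone in the finite-dimensional space $\mathrm{N}^1(X_0)_{\mathbb R}$; in particular it has infinitely many extremal rays. On a surface, every extremal ray of the pseudo-effective cone that does not lie in the closure of the positive cone $Q$ is generated by the class of an irreducible curve of negative self-intersection (here I use the structure recalled around Proposition~\ref{prop:eff2}). So $X_0$ carries infinitely many negative curves $C_1, C_2, \dots$ whose classes span pairwise distinct extremal rays. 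I would first reduce to the case where $B$ is the spectrum of a DVR, by base-changing along a map from a smooth curve hitting $0$ and a general point, which preserves flatness and the hypothesis on Picard lattices of the relevant fibers.

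Next I would propagate these curves to the general fiber. The key step is: given an irreducible curve $C \subseteq X_0$, after possibly a further finite base change there is a family of curves in $X \to B$ whose special member is $C$ (or a multiple of it) and whose general member $C_t \subseteq X_t$ is effective. Concretely, $[C]$ lifts through the isometry to a class $[\widetilde C] \in \mathrm{N}^1(X_t)_{\mathbb R}$; since the Picard lattices match and intersection numbers are locally constant in flat families, $\widetilde C$ has the same self-intersection and the same arithmetic genus as $C$. One then uses semicontinuity of $h^0$ in the flat family together with Riemann--Roch on the surface: because $C$ moves in a flat family over $B$ (take the closure $\overline{C}$ of $C$ in $X$, which is flat over $B$ after removing embedded points, or apply a relative Hilbert scheme argument), its class specializes from the generic fiber, and effectivity is a closed condition, so $[\widetilde C]$ is pseudo-effective on $X_t$. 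Doing this for each $C_i$ produces infinitely many classes $[\widetilde C_i] \in \overline{\rm Eff}(X_t)$.

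It then remains to see that these classes still span infinitely many distinct extremal rays of $\overline{\rm Eff}(X_t)$, contradicting polyhedrality. Under the Picard-lattice isometry $\varphi\colon \mathrm{N}^1(X_t)_{\mathbb R} \xrightarrow{\sim} \mathrm{N}^1(X_0)_{\mathbb R}$, each $[\widetilde C_i]$ has negative self-intersection, hence lies outside $\overline Q$ on $X_t$; and by Proposition~\ref{prop:eff2} (applicable once $\rho \geq 3$, which is part of the hypothesis inherited from $X_0$) a polyhedral $\overline{\rm Eff}(X_t)$ would be generated by finitely many negative curve classes, each extremal ray outside $\overline Q$ accounting for at most one curve class. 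Since an effective class with negative self-intersection on a surface is a nonnegative combination of its (finitely many) negative irreducible components plus a nef part, and since the $[\widetilde C_i]$ have pairwise distinct images $[C_i]$ spanning distinct rays, only finitely many of the $[\widetilde C_i]$ could be so decomposed — a contradiction. Hence $\overline{\rm Eff}(X_0)$ must be polyhedral.

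\medskip

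The main obstacle I expect is the second step: ensuring that an irreducible negative curve on the special fiber really does deform to an \emph{effective} divisor on the general fiber in the prescribed numerical class. Specialization of effective classes goes the easy way (from general to special), so one must instead transport each negative curve \emph{backwards} using the lattice isometry, and then argue that the matched class on $X_t$ is effective. The cleanest route is probably to invoke that on a surface a class $D$ with $D^2 > -2$... no: more robustly, one uses that the relative Picard scheme is available once $X \to B$ is nice enough, together with the fact that the obstruction to lifting a line bundle from $X_0$ to $X$ lies in $H^2(X_0, \mathcal O_{X_0})$, and then semicontinuity forces $h^0$ not to drop to $0$ when the self-intersection and genus are pinned by the isometry and Riemann--Roch gives a positive bound. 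Handling the case $H^2(\mathcal O_{X_0}) \neq 0$ (e.g. when $X_0$ has positive geometric genus, as happens for symmetric products of curves) and the possibility of having only $\mathbb Q$-factoriality rather than smoothness — so that one works with Weil divisors and multiples — is where the technical care is needed.
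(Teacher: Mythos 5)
There is a genuine gap, and it sits exactly at the step you yourself flag as ``the main obstacle'': transporting an irreducible negative curve $C\subseteq X_0$ \emph{backwards} to an effective class on the general fiber. This step is false in general, and none of the tools you invoke can repair it. Semicontinuity of $h^0$ goes the wrong way for you: it gives $h^0(X_0,L_0)\geq h^0(X_t,L_t)$, so knowing that the class is effective on the special fiber says nothing about the general fiber. Taking the closure of $C$ in $X$ does not produce a flat family over $B$: since $C$ lies in the closed fiber $X_0$, its closure in $X$ is $C$ itself, supported over $0\in B$; likewise the relative Hilbert scheme is proper over $B$, so its points specialize from the generic fiber to the special one but need not lift in the other direction. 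Riemann--Roch does not save the day either, since for a negative curve $\chi$ need not be positive and $h^1$, $h^2$ can absorb everything. A concrete illustration of the failure (consistent with the proposition, since both cones are polyhedral): a family of blow-ups of $\mathbb P^2$ at three points degenerating from general position to collinear position has constant Picard lattice, yet the $(-2)$-class $h-e_1-e_2-e_3$ is effective only on the special fiber. In short, the pseudo-effective cone can strictly \emph{grow} under specialization even when the lattice is constant, which is precisely why the proposition is stated as a closedness property and should not be attacked through its contrapositive.

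The intended argument runs in the opposite, easy direction and avoids the problem entirely. If the general fiber is polyhedral, Proposition~\ref{prop:eff2} (after disposing of the case $\rho\leq 2$) says $\overline{\rm Eff}(X_t)$ is generated by finitely many negative irreducible curves $C_1,\dots,C_n$, and moreover that the positive cone $Q$ is contained in ${\rm Cone}([C_1],\dots,[C_n])$. These finitely many curves \emph{do} specialize to (possibly reducible) effective curves on $X_0$ by semicontinuity, used now in its correct direction. The inclusion $Q\subseteq{\rm Cone}([C_i])$ is a purely numerical statement, hence transfers to $N^1(X_0)_{\mathbb R}$ through the lattice isometry, and is only improved by replacing each specialized class with the cone on its irreducible components $C_{ij}$. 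Applying Proposition~\ref{prop:eff2} once more on $X_0$ yields $\overline{\rm Eff}(X_0)={\rm Cone}([C_{ij}])$, which is polyhedral. Your proposal would need to be restructured along these lines; as written, the lifting step cannot be carried out.
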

\begin{proof}
If the Picard rank is $\leq 2$, then the
presudoeffective cone is polyhedral and
there is nothing to prove. We then 
assume that the Picard rank is 
at least $3$. 
By Proposition~\ref{prop:eff2} the pseudo-effective
cone of the general fiber is generated by
finitely many classes of negative curves
$C_1,\dots,C_n$. By semicontinuity of cohomology
dimension, each such curve $C_i$ degenerate
to a, possibly reducible, curve of $X_0$.
Let $C_{i1},\dots,C_{ir_i}$ be the irreducible
components of the degenerate curve. We claim
that in the N\'eron-Severi space
of the special fiber $X_0$
the following inclusions of cones hold
\[
 Q
 \subseteq
 {\rm Cone}([C_i]\, :\, 1\leq i\leq n)
 \subseteq
 {\rm Cone}([C_{ij}]\, :\, 1\leq i\leq n,
 \, 1\leq j\leq r_i).
\]
Indeed, by Proposition~\ref{prop:eff2}, the first inclusion holds true 
in the N\'eron-Severi space of the general 
fiber and, by the assumption on the Picard lattice of the special fiber, it holds
as well on the N\'eron-Severi space of the 
special fiber.
The second inclusion follows by the definition of the curves $C_{ij}$.
Then, again by Proposition~\ref{prop:eff2}, 
one concludes that $\overline{\rm Eff}(X_0)
= {\rm Cone}([C_{ij}]\, :\, 1\leq i\leq n,
 \, 1\leq j\leq r_i)$.
\end{proof}

\section{Symmetric product of a curve}
\label{sec:sym}
Given a genus $g\geq 1$ curve $C$, we denote by
$C^{(2)}$ its second symmetric product, that is
the quotient of $C\times C$ by the involution $\tau$, defined by $(p,q)\mapsto (q,p)$, and  
we denote by $p\oplus q\in C^{(2)}$ the
class of $(p,q)\in C\times C$.

From now on we assume that $C$ is hyperelliptic, 
we fix a hyperelliptic involution $\sigma$ and 
we denote by $p_1,\dots,p_{2g+2} \in C$ its fixed points.
Observe that $\sigma$ induces two commuting involutions $\sigma_1,\sigma_2$ on $C\times C$, each of
which acts only on one  coordinate. 
The group $G :=
\langle\sigma_1,\sigma_2,\tau\rangle$
is isomorphic to $D_4$, with center generated 
by the composition $\sigma_1\cdot\sigma_2$, that
we still denote by $\sigma$ with abuse of notation.
We have the following diagram of degree two 
quotient morphisms
\[
 \xymatrix{
  C\times C\ar[r]\ar[d]
  & S\ar[r]\ar[d]
  & \mathbb P^1\times\mathbb P^1\ar[d]\\
  C^{(2)}\ar[r]_-{\phi}
  & X\ar[r]_-{\psi}
  & \mathbb P^2,
 }
\]
where each vertical map is the
quotient by $\tau$, the first orizontal map on each line is the 
quotient by $\sigma$, and the second
one is the quotient by $\sigma_1$.

\begin{remark}
\label{rem:tan}
Let us consider the diagonal $\Delta_+:=\{p\,\oplus\, p\mid p \in C\}$ 
and the antidiagonal $\Delta_- := \{p\oplus\sigma(p) \mid p \in C\}$ in 
$C^{(2)}$. We set $C_\pm := \phi(\Delta_\pm)\subseteq X$
and $\Gamma := \psi(C_+) = \psi(C_-)\subseteq \pp^2$. 
From the
above diagram we see that $\Gamma$ is the image of the
diagonal of $\pp^1\times\pp^1$ via the double cover
defined by $([s_0:s_1],[t_0:t_1])
\mapsto [s_0t_0:s_1t_1:s_0t_1+s_1t_0
]$, so that it is the conic $\Gamma =
V(x_3^2-4x_1x_2) \subseteq \pp^2$.

Given a point $p\in C$, consider the two curves
$\{p\}\times C$ and $C\times\{p\}$ in $C\times C$.
On $\pp^1\times\pp^1$ they are mapped to two 
lines on the two different rulings, while on 
$C^{(2)}$ they are mapped to the curve 
$C_p:=\{p\oplus q \mid q \in C\}$. 
We set $B_p:=\phi(C_p)\subseteq X$ and
$L_p:=\psi(B_p) \subseteq \pp^2$.
Observe that $B_p$ is isomorphic to $C$, while
$L_p$ is a line which is tangent to $\Gamma$ at 
the image of $p\oplus p$ (equivalently, at the image of
$\sigma(p)\oplus p$) in $\pp^2$.
We finally remark that given the curve
$C_{\sigma(p)}:=\{\sigma(p)\oplus q \mid q \in C\}$, 
we have $\phi(C_{\sigma(p)}) = 
\phi(C_p) = B_p\subseteq X$. 
\end{remark}

\begin{proposition}
\label{prop:S1}
The surface $X$ is a double cover of the plane,
branched along the union of $2g+2$ lines,
tangent to the conic $\Gamma$.
It has $\binom{2g+2}{2}$ singular points, 
namely the ordinary double points points 
$\phi(p_i\oplus p_j)$, for $1\leq i < j \leq 2g+2$.
The equation of $X$ in the weighted projective space 
$\mathbb P(1,1,1,g+1)$ is
\[
 x_4^2 + \prod_{i=1}^{2g+2}\ell_i = 0,
\]
where $\ell_1,\dots,\ell_{2g+2}\in\mathbb C[x_1,x_2,x_3]$
are defining polynomials for the $2g+2$ lines.
\end{proposition}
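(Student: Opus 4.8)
The plan is to realise $X$ explicitly as a quotient of $C\times C$ and to recognise that quotient as the asserted double cover of $\mathbb{P}^2$. I would fix an affine coordinate $x$ on the first $\mathbb{P}^1$ so that none of the $2g+2$ distinct branch points $b_i=h(p_i)$ of the hyperelliptic map $h\colon C\to\mathbb{P}^1$ is at infinity, so that $C$ is the smooth completion of $\{y^2=f(x)\}$ with $f=\prod_{i=1}^{2g+2}(x-b_i)$ squarefree, and the second factor is $\{w^2=f(z)\}$. Reading the diagram preceding the statement, $X=(C\times C)/H$ with $H=\langle\sigma_1\sigma_2,\tau\rangle$, which has order $4$ since $\sigma_1\sigma_2$ is central in $G$, and $\psi$ is the quotient by the residual involution $G/H$; in particular $\deg\psi=|G|/|H|=2$, and $X$ is normal, being a quotient of a smooth variety by a finite group.

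The computational heart is the invariant ring over the affine chart of $\mathbb{P}^2$ which is the $\pi$-image of the chart $\{s_0t_0\neq0\}$ of $\mathbb{P}^1\times\mathbb{P}^1$ in Remark~\ref{rem:tan}, namely $\{x_1\neq0\}$ with coordinates $e_1=x_3/x_1=x+z$ and $e_2=x_2/x_1=xz$. Over this chart $C\times C$ has ring $A=k[x,y,z,w]/(y^2-f(x),w^2-f(z))$, free over $k[x,z]$ on $1,y,w,yw$; a short check shows that the $\sigma_1\sigma_2$-invariants are $k[x,z]\oplus k[x,z]\cdot yw$ and that $\tau$-invariance then leaves $A^H=k[e_1,e_2]\oplus k[e_1,e_2]\cdot u$, with $u:=yw$ satisfying $u^2=f(x)f(z)=\prod_{i=1}^{2g+2}(e_2-b_ie_1+b_i^2)$. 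So over this chart $X$ is the double cover of $\mathbb{A}^2_{e_1,e_2}$ branched along the $2g+2$ distinct lines $e_2-b_ie_1+b_i^2=0$. Homogenising, put $\ell_i:=b_i^2x_1+x_2-b_ix_3$ and $Y:=\{x_4^2+\prod_i\ell_i=0\}\subseteq\mathbb{P}(1,1,1,g+1)$; I would then identify $X\cong Y$ over $\mathbb{P}^2$, since both are normal, finite over $\mathbb{P}^2$, and agree over $\{x_1\neq0\}$ up to a rescaling of $x_4$, hence have the same function field and both equal the normalisation of $\mathbb{P}^2$ in it.

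It remains to read off the geometry from $Y$. Substituting $x_2=b_ix_3-b_i^2x_1$ into $x_3^2-4x_1x_2$ gives the perfect square $(x_3-2b_ix_1)^2$, so each $\ell_i$ is tangent to $\Gamma=V(x_3^2-4x_1x_2)$ at $[1:b_i^2:2b_i]$; since $\psi\circ\phi$ lifts, compatibly with all the quotients, to $\pi\circ(h\times h)\colon C\times C\to\mathbb{P}^1\times\mathbb{P}^1\to\mathbb{P}^2$, this point is $(\psi\circ\phi)(p_i\oplus p_i)$, so $\ell_i=L_{p_i}$ in the notation of Remark~\ref{rem:tan}; this gives the first sentence of the statement. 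For the singularities I would argue directly on $Y$: the critical locus of $x_4^2+\prod_i\ell_i$ is $\{x_4=0\}$ over $\Sing(V(\prod_i\ell_i))$; distinctness of the $b_i$ forces the $\ell_i$ distinct with no three concurrent, and $\ell_i\cap\ell_j=[1:b_ib_j:b_i+b_j]$ runs over $\binom{2g+2}{2}$ distinct points, at each of which $V(\prod_i\ell_i)$ has an ordinary node, so $Y$ acquires there an ordinary double point $x_4^2+t_1t_2=0$; as $Y$ misses the unique singular point $[0:0:0:1]$ of the ambient weighted space, this lists all of $\Sing(Y)$. Finally the same lift gives $(\psi\circ\phi)(p_i\oplus p_j)=\pi(b_i,b_j)=\ell_i\cap\ell_j$, over which $\psi$, being branched along $\bigcup_i\ell_i$, has a single point, necessarily the node; hence $\Sing(X)=\{\phi(p_i\oplus p_j):1\le i<j\le2g+2\}$, with distinct pairs giving distinct nodes.

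The step I expect to need the most care is the invariant computation and its globalisation: one must use exactly the chart of $\mathbb{P}^2$ coming from the map $\pi$ of Remark~\ref{rem:tan}, so that the affine lines $e_2-b_ie_1+b_i^2$ homogenise to genuine tangent lines of $\Gamma$, and one must check that the singularities of $X$ are ordinary double points and nothing worse, which is precisely where the distinctness of the $b_i$ (no two lines coincide, no three are concurrent) is used.
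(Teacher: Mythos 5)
Your proposal is correct, but it follows a genuinely different route from the paper. The paper's proof works orbit-theoretically: it determines the ramification divisor of $\psi\colon X\to\mathbb P^2$ by comparing the $\langle\sigma,\tau\rangle$-orbit of a point $(p,q)\in C\times C$ with its $G$-orbit, concluding that ramification occurs exactly when $p$ or $q$ is a Weierstrass point, so that the branch divisor is $L_{p_1}\cup\dots\cup L_{p_{2g+2}}$, tangent to $\Gamma$ by Remark~\ref{rem:tan}; the equation in $\mathbb P(1,1,1,g+1)$ and the count of nodes are then left as standard consequences of double-cover theory and are not argued in detail. You instead derive the equation first, by computing the ring of $H$-invariants of the affine model $k[x,y,z,w]/(y^2-f(x),w^2-f(z))$ and homogenising $u^2=\prod_i(e_2-b_ie_1+b_i^2)$, and then read off the branch locus, the tangency to $\Gamma$, and the $\binom{2g+2}{2}$ ordinary double points directly from the equation. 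Your version is longer and coordinate-dependent, but it actually establishes all three assertions of the proposition (the explicit $\ell_i=b_i^2x_1+x_2-b_ix_3$, the tangency points $[1:b_i^2:2b_i]$, and the identification of the nodes with $\phi(p_i\oplus p_j)$), whereas the paper's written proof only covers the branch-locus statement. Two small points you should make explicit: the normality of $Y$ (needed for the identification $X\cong Y$ as normalisations of $\mathbb P^2$ in the common function field) follows from its being a hypersurface, hence $S_2$, together with the isolated-singularity analysis you perform afterwards, so the logical order should be adjusted slightly; and the whole computation tacitly assumes characteristic $\neq 2$, which is consistent with the proposition's use of $\mathbb C[x_1,x_2,x_3]$ even though the paper's general setup allows arbitrary characteristic.
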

\begin{proof}
The ramification divisor of 
$\psi : X\to\pp^2$ consists
of the images of points 
$(p,q)\in C\times C$
such that the orbit of $(p,q)$ with 
respect to $\langle \sigma,\tau\rangle$
equals the orbit with respect to the 
whole group $G$, that is
$(\sigma(p),q)\in\{(p,q),(q,p)$,
$(\sigma(p),\sigma(q))$,$(\sigma(q),\sigma(p))\}$.
The latter condition holds if and only if 
either $p$ or $q$ is a fixed points
for $\sigma$. Thus the ramification
is the union of the curves
$B_{p_1}\dots B_{p_{2g+2}}$
We conclude that the branch divisor of $\psi$
is the union of the $2g+2$ lines
$L_{p_1},\dots,L_{p_{2g+2}}$
which, by Remark~\ref{rem:tan},
are tangent to $\Gamma$ at the 
images of  $p_j\oplus p_j$.

\end{proof}

\begin{remark}
\label{q}
Since $X$ is a hypersurface of a weighted projective 
space, by~\cite{Do}*{Thm. 4.2.2} we have that $q(X) = 0$. 
In particular $X$ is a weak del Pezzo of
degree $2$ if $g=1$, it is a singular $K3$
when $g=2$ and it is of general type when
$g\geq 3$.
\end{remark}

\begin{proposition}
 \label{prop:C2X}
Assume that $C$ is a very general 
hyperelliptic curve of genus $g\geq 1$. 
Then both $C^{(2)}$ and $X$ have Picard rank 
$2$ and their effective cones are generated by
the classes of the images of the diagonal and
the antidiagonal. 
The intersection matrices of these curves 
in $C^{(2)}$ and of their images in $X$ are
\[
 \begin{pmatrix}
  4-4g&2g+2\\
  2g+2& 1-g
 \end{pmatrix}
 \qquad
 {and}
 \qquad
 \begin{pmatrix}
  2-2g&2g+2\\
  2g+2&2-2g
 \end{pmatrix}
\]
respectively.
\end{proposition}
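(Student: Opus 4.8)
The plan is to compute the intersection numbers first, then deduce the Picard rank and the structure of the effective cone from positivity considerations. First I would fix notation on $C\times C$: write $F_1 = \{p\}\times C$ and $F_2 = C\times\{p\}$ for the two ruling fibres and $\delta$ for the diagonal. Standard computations give $F_1^2 = F_2^2 = 0$, $F_1\cdot F_2 = 1$, $\delta\cdot F_i = 1$ and $\delta^2 = 2 - 2g$ (the self-intersection of the diagonal being the degree of the tangent bundle, $\chi(C)$). The antidiagonal $\delta_- = \{(p,\sigma(p))\}$ satisfies the same numerics, $\delta_-^2 = 2-2g$ and $\delta_-\cdot F_i = 1$, while $\delta\cdot\delta_- = \#\{p : p = \sigma(p)\} = 2g+2$. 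Pushing forward along the degree-two quotients and using the projection formula $\pi_*\pi^* = 2\,\mathrm{id}$ together with $\pi^*(\Delta_\pm) = \delta + \delta_-$ (respectively the appropriate $\tau$-symmetrizations), I would track how the self-intersections scale. Concretely $\phi^*[\Delta_+]$ is twice the diagonal class upstairs only along the branch, so $(\Delta_+)^2$ in $C^{(2)}$ picks up the factor from the branch locus; carrying this out yields $(\Delta_+)^2 = 4 - 4g$, $(\Delta_-)^2 = 1 - g$ and $\Delta_+\cdot\Delta_- = 2g+2$ on $C^{(2)}$, and then $C_+^2 = C_-^2 = 2-2g$ and $C_+\cdot C_- = 2g+2$ on $X$ after the further quotient by $\tau$ (using $\psi$ is étale in codimension one away from the nodes and the $A_1$ singularities contribute nothing to the intersection product on the $\mathbb{Q}$-factorial surface $X$).

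Next I would establish that the Picard rank is $2$ for a \emph{very general} hyperelliptic $C$. Since $q(X)=0$ by Remark~\ref{q}, $\mathrm{N}^1(X)_{\mathbb Q} = \mathrm{Pic}(X)_{\mathbb Q}$, and similarly a Hodge-theoretic argument (the transcendental lattice of $C^{(2)}$ being governed by $H^2$ of the Jacobian, which has no extra Hodge classes for very general $C$) shows that the only algebraic classes on $C^{(2)}$ are spanned by $[\Delta_+]$ and $[\Delta_-]$; equivalently one uses that $\mathrm{N}^1(C^{(2)})$ contains the pullback $\mathrm{NS}(C)^{\oplus 2}$-part and the diagonal, and for very general $C$ there is nothing more. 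The two classes $[\Delta_+],[\Delta_-]$ are independent precisely because the displayed $2\times 2$ matrix has nonzero determinant $(4-4g)(1-g) - (2g+2)^2$, which one checks is negative for $g\ge 1$, hence nondegenerate; the same holds for the matrix on $X$. So $\rho(C^{(2)}) = \rho(X) = 2$.

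Finally, with $\rho = 2$ the effective (equivalently pseudo-effective) cone is a closed $2$-dimensional cone, and I would show its two boundary rays are exactly $\mathbb{R}_{\ge 0}[\Delta_+]$ and $\mathbb{R}_{\ge 0}[\Delta_-]$ (resp. $[C_\pm]$ on $X$). One direction is clear since both curves are effective. For the reverse, note that on $X$ the intersection form restricted to $\langle[C_+],[C_-]\rangle$ has signature $(1,1)$ (determinant $(2-2g)^2 - (2g+2)^2 < 0$), so the light cone $Q$ is contained in the cone spanned by $[C_+]$ and $[C_-]$; since any pseudo-effective class $D$ has $D^2 \le 0$ forced would put it outside $Q$ only if it lies in that cone, and an extremal effective class $D$ with $D^2 < 0$ must be proportional to an irreducible negative curve, which — by the Picard rank $2$ and the fact that $C_+$ (for $g\ge 2$) resp. $C_-$ have negative self-intersection — can only be $C_+$ or $C_-$. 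The genus-$1$ case $g=1$ needs a separate remark since there $C_+^2 = 0$; here one instead argues directly that $X$ is a weak del Pezzo of degree $2$ whose two $(-1)$ or $0$-curves bounding $\overline{\mathrm{Eff}}$ are the images of the two diagonals. The analogous statement on $C^{(2)}$ follows either by the same argument or by transporting along $\phi$ using Proposition~\ref{prop:eff1} (note $\rho(C^{(2)}) = \rho(X) = 2$). \textbf{The main obstacle} I anticipate is the bookkeeping of the branch contributions when pushing intersection numbers through the two successive double covers — getting the asymmetry $(\Delta_+)^2 = 4-4g$ versus $(\Delta_-)^2 = 1-g$ right (the diagonal meets the ramification of $C\times C \to C^{(2)}$, the antidiagonal does not in the same way), and checking that the $A_1$-singularities of $X$ genuinely do not perturb the $\mathbb{Q}$-valued intersection pairing on $\mathrm{N}^1(X)$.
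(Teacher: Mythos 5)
Your proposal arrives at all the correct numbers and its overall architecture matches the paper's: the Picard rank comes from $\Num^1(C^{(2)})\simeq\mathbb Z\oplus\Num^1(JC)$ plus the fact that $\Num^1(JC)$ has rank one for very general $C$, and the cone statement then follows from rank $2$ together with the (negative, $=-16g$) determinants of the displayed matrices. Where you differ is in the computation of the matrices: the paper works downstairs, using $C_++C_-=\psi^*\Gamma$ to get $(C_++C_-)^2=8$, whence $C_\pm^2=2-2g$ since $C_+\equiv C_-$ and $C_+\cdot C_-=2g+2$, and then transports to $C^{(2)}$ through the double cover branched along $C_-$; you start on $C\times C$ and push down. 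Both routes work, but your displayed formula $\pi^*(\Delta_\pm)=\delta+\delta_-$ is wrong as written: $\delta$ is the ramification of $C\times C\to C^{(2)}$ while $\delta_-$ is not, so the correct statements are $\pi^*\Delta_+=2\delta$ and $\pi^*\Delta_-=\delta_-$, giving $4\delta^2=2\Delta_+^2$ and $\delta_-^2=2\Delta_-^2$, i.e. $\Delta_+^2=4-4g$ and $\Delta_-^2=1-g$. The slip is in the formula, not the conclusion, but since you yourself single out this bookkeeping as the main obstacle, it should be stated precisely. (Also note you write $\phi^*[\Delta_+]$ where you mean the quotient $C\times C\to C^{(2)}$, not $\phi\colon C^{(2)}\to X$.)

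The one substantive gap is in the Picard rank step. Your Hodge-theoretic argument ("no extra Hodge classes for very general $C$") is the standard statement for a very general curve in $M_g$, but here $C$ ranges only over the hyperelliptic locus, which is a proper subvariety of $M_g$ for $g\ge 3$; generic Picard rank one for Jacobians of very general \emph{hyperelliptic} curves is not automatic and is exactly the content of the result of Pirola~\cite{pi} that the paper invokes. You should cite or prove that statement rather than the generic one. Two smaller points: the assertion that an extremal pseudo-effective class with negative square is represented by an irreducible curve, which then "can only be $C_+$ or $C_-$", needs the observation that any irreducible curve other than $C_\pm$ meets both non-negatively and hence lies in the dual of $\Cone([C_+],[C_-])$, which is contained in the closed positive cone; and for $g=1$, where $C_\pm^2=0$, equality $\barEff=\Cone([C_+],[C_-])$ requires ruling out irreducible negative curves altogether (classical for the Atiyah surface $C^{(2)}$), a point you flag but do not settle. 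The paper itself leaves the cone assertion implicit, so supplying this argument is a welcome addition once these details are tightened.
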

\begin{proof}
By~\cite{acgh}*{Chapter~VIII, $\S 5$},
 $\Num^1(C^{(2)}) 
 \simeq
 \mathbb Z\oplus \Num^1(JC)$,
so that ~\cite{pi}*{Prop. 3.4}
implies that the Picard rank of 
$C^{(2)}$ is $2$.
As a consequence the Picard rank of $X$ 
is at
most $2$ and it is $2$ because $N^1(X)$ 
contains two numerically independent classes.
The diagonal $\Delta_+$ and the antidiagonal $\Delta_-$
are both mapped to the conic $\Gamma$ of $\mathbb P^2$,
tangent to the $2g+2$ lines. Thus $C_++C_- = \phi(\Delta_+)
+\phi(\Delta_-)$ is the 
pullback of $\Gamma$, so that $(C_++C_-)^2 = 8$. 
Since these two curves are numerically equivalent and
intersect in $2g+2$ points, we obtain the
second matrix. To get the first matrix it is
enough to observe that the double cover 
$C^{(2)}\to X$ branches at $C_-$, which is
the image of the antidiagonal.
\end{proof}

\section{Proof of Theorem~\ref{thm:2}
and~\ref{thm:1}}
\label{sec:proof}
\begin{proof}[Proof of Theorem~\ref{thm:2}]
Let us fix a point $p\in C$, such that 
the class of $\sigma(p) - p$
is non-torsion, and 
let $\tilde C^{(2)}\to C^{(2)}$ be 
the blowing-up at the point 
$p\oplus\sigma(p)\in \Delta_-\cap C_{\sigma(p)}$, 
with exceptional divisor $E$.
First of all observe that the point $p\oplus\sigma(p)$
is invariant for $\sigma$, so that the latter
lifts to an involution on the blow-up 
$\tilde C^{(2)}$ that, by abuse of notation, we 
denote by the same symbol $\sigma$.
Let $\tilde\phi\colon \tilde C^{(2)}\to \tilde X:= \tilde C^{(2)}/\langle\sigma\rangle$ 
be the quotient morphism.
The involution $\sigma$ has two fixed points 
on the exceptional divisor $E$: the intersection point 
with the strict transform of $\Delta_-$, and one isolated point $x$, 
so that $\tilde\phi(x)$ is a singular point of $\tilde X$.
We have a birational morphism 
$\eta\colon\tilde X\to X$ which is the contraction of $\tilde\phi(E)$, having self-intersection 
$-1/2$ in $\tilde X$.
The map $\eta$ is a weighted blow-up at the point $\phi(\sigma(p)\oplus p)$ and  can also be described as follows.
Consider the blow-up $X_1\to X$ at the point $\phi(p\oplus\sigma(p))$, 
with exceptional divisor $E_1$, and then the blow-up $X_2\to X_1$, 
at the intersection point of $E_1$ with the strict transform of 
$B_p = \phi(C_p) = \phi(C_{\sigma(p)})$ (see Remark~\ref{rem:tan}). 
Finally contract the strict transform of $E_1$, which is now a $(-2)$-curve
(its image gives the singular point $\tilde\phi(x)\in \tilde X$).
We can resume the above discussion in the following commutative diagrams:
\[
 \xymatrix{
  {\tilde C}^{(2)}\ar[r]^-{\tilde\phi}\ar[d]
  & {\tilde X}\ar[d]^-{\eta}
  & &  X_2\ar[d]\ar[r]
  & \tilde X\ar[d]^-{\eta}
  & \\
  C^{(2)}\ar[r]_-{\phi}
  & X\ar[r]_-{\psi}
  & \pp^2
  &
    X_1 \ar[r]  & X.
 }
\]
We are going to show that 
the pseudo-effective cone of $\tilde{X}$ is not polyhedral. 
Observe that the strict transform
of $B_p$ in $\tilde X$ is isomorphic 
to $B_p$ and hence to $C_{\sigma(p)}$ and to $C$. Therefore, by abuse of notation, we denote this strict transform by $C$.
Since $\psi(B_p)\subseteq \pp^2$ is the
line $L_p$, tangent to the conic $\Gamma$, we have that $B_p^2 = 2$. 
By the description of $\eta\colon \tilde X\to X$,
we are blowing up a point on $B_p$ and then the same point on its strict transform.
Therefore $C^2 = 0$, and we can write
\begin{equation}
    \label{eq:C}
 C\sim (\psi\circ\eta)^*(L_p) - 2\tilde\phi(E).
\end{equation}
Let us compute now the restriction $\mathcal O_C(C)$. 
Since $L_p$ is a line in $\pp^2$, 
the restriction of 
$(\psi\circ\eta)^*(L_p)$ to $C$ is the $g_2^1$, so that it is equivalent to
$\sigma(p) + p$.
On the other hand, the restriction of $\tilde\phi(E)$ to $C$ corresponds
to the point we are blowing-up in $\eta\colon\tilde X \to X$, i.e. 
to the image of $\sigma(p)\oplus p$ in $X$.
Via the isomorphism $C_{\sigma(p)}\to C$, the point 
$\sigma(p)\oplus p$ corresponds to $p\in C$, so
that we conclude that the restriction of 
$\tilde\phi(E)$ to $C$ is $p$.
Summing up we obtain
\[
 \mathcal O_C(C)
 \simeq
 \mathcal O_{C}(\sigma(p)+p-2p)
 =
 \mathcal O_{C}(\sigma(p)-p).
\]
Since we are assuming that $\sigma(p)-p$ is non-torsion, 
we deduce that $ \mathcal O_C(C)$ is non-torsion, i.e.
the class of $D$ spans an extremal ray of
$\barEff(\tilde X)$, so that this cone 
is not polyhedral.
By Proposition~\ref{prop:eff1} we conclude that 
the cone $\barEff(\tilde C^{(2)})$ is not polyhedral.
\end{proof}

\begin{remark}
\label{rem:jenia}
In genus 2 the Abel-Jacobi map presents 
$C^{(2)}$ as the blow-up of ${\rm Pic}^2C$ 
in the point $\Omega$ that corresponds to the canonical
class $K_C$ of $C$, with exceptional divisor
$\Delta_-\subseteq C^{(2)}$.
So in this case we blow-up ${\rm Pic}^2C$ twice
infinitely near at $\Omega$. 
The map $C\to {\rm Pic}^2C$ given by 
$x\mapsto [x+\sigma(p)]$ embeds $C$ as a theta-divisor
passing through $\Omega$ and with tangent direction 
$p+\sigma(p)$. So after the blow-up the proper transform
of $C$ is a curve of self-intersection $0$. The
restriction of $C$ to $C$ will be $K_C-2p$ (because we
blow-up twice the same point $p$ of $C$), so it is $\sigma(p)-p$ as we claim in the proof of the general 
statement.
\end{remark}

\begin{remark}
Assume that $\sigma(p) - p$ is not
torsion, so that the pseudo-effective cone
$\barEff(\tilde X)$ is not
polyhedral. 
If $g = 1$ we are going to 
show that on $\tilde X$ there 
are infinitely
many negative rays accumulating
on $C$ (see
Proposition~\ref{prop:dn}
and Figure~\ref{fig:eff}).
If $g > 1$, consider the
intersection matrix of the 
classes $\Delta_+,\Delta_-,E_1,
C,E$ on $\tilde X$:
\[
 \begin{pmatrix}
  2-2g & 2+2g & 1 & 2 & 0\\
  2+2g & -2g & 1 & 0 & 1\\
  1 & 1 & \frac12 & 1 & 0\\
  2 & 0 & 1 & 0 & 1\\
  0 & 1 & 0 & 1 & -\frac12
 \end{pmatrix}.
\]
We already know that $C$
generates an extremal ray
of $\barEff(\tilde X)$, and the
same holds for the classes 
$\Delta_+, \Delta_-$ and $E$, 
since they have negative 
self-intersection. In particular 
$\barEff(\tilde X)$ (and hence also
$\barEff(\tilde C^{(2)})$) has a 
polyhedral part
(see Figure~\ref{fig:g>1}).
\end{remark}

\begin{figure}[htbp]
\hspace{5mm}
\includegraphics[width=7cm]{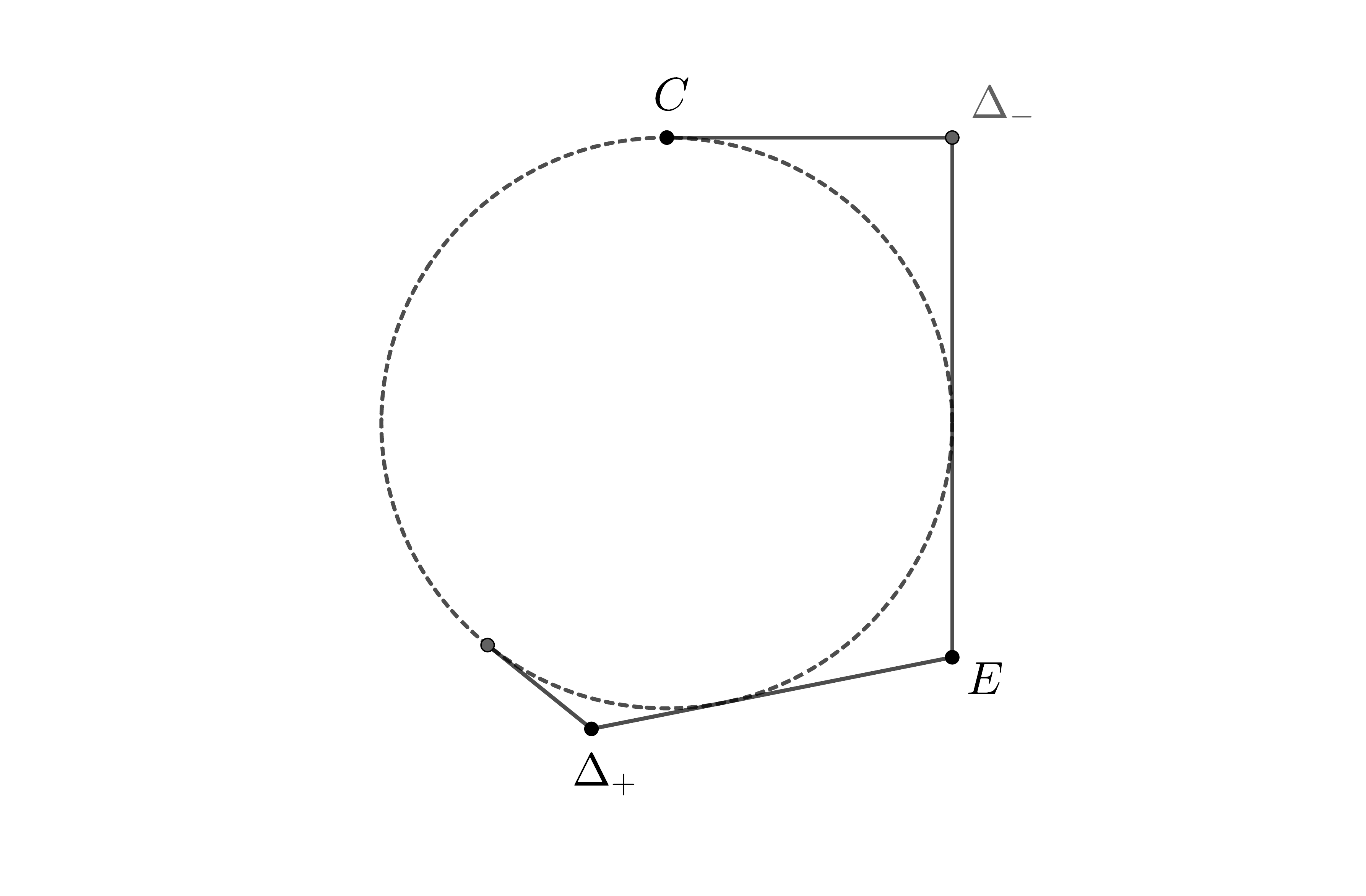}
\caption{$\Eff(\tilde X)$, when 
$g > 1$} 
\label{fig:g>1}
\end{figure}

\begin{question}
When $C$ is hyperelliptic of genus $g > 1$, 
does $\tilde C^{(2)}$ have infinitely many 
negative curves?
\end{question}

\begin{proof}[Proof of Theorem~\ref{thm:1}]
Let $\pi\colon\mathcal X\to B$ be a flat
family whose general fiber is a general
genus $g$ curve $C$ and whose special fiber
over $0\in B$ is a general hyperelliptic 
curve $C_0$. Passing to the symmetric product
one gets a new flat family with basis $B$.
Blowing-up a section of the new family
which cuts out $p \oplus \sigma(p)$ on
$C_0^{(2)}$, with $\sigma(p)-p$ non-torsion, one concludes, by Theorem~\ref{thm:2} and
Proposition~\ref{prop:eff3},
that the 
pseudo-effective cone of the blow-up 
$\tilde C^{(2)}$ is non-polyhedral.
\end{proof}

We remark that when $C$ and the point that we are 
blowing up are general, even if we
know that $\barEff(\tilde C^{(2)})$ is not 
polyhedral, we do not know any negative class.
Therefore it is natural to ask the 
following.
\begin{question}
 When $C$ is general, does $\tilde C^{(2)}$ have infinitely many negative curves?
\end{question}

\section{The genus one case}
\label{sec:ell}
In this section we make the assumption
that $C$ has genus $1$. 
In particular we first show that in Theorem~\ref{thm:1}
also the opposite implication holds (see Theorem~\ref{thm:3}).
Then we describe
the rays of the pseudo-effective cone 
of $\tilde X$, both when it is polyhedral and when it
is not (Proposition~\ref{prop:dn}), and finally we give 
a planar model for the resolution $Z$ of $\tilde X$.

\begin{remark}
 \label{rem:int}
When $g(C) = 1$, the symmetric product $C^{(2)}$ is a ruled surface
whose fibers correspond to the $g_1^2$'s of $C$.
Observe that if we fix two points $p\neq q \in C$,
they define a unique $g_1^2$, and hence a
hyperelliptic involution $\sigma$. This implies
that the antidiagonal $\Delta_- = \{r\oplus \sigma(r) 
\mid r\in C\}$ is indeed a fiber.
\end{remark}

Let us recall the following definition from~\cite{CLTU}*{$\S~3$}.
\begin{definition}\label{def:pair}
An  {\em elliptic pair} $(C,X)$ consists of  a projective
rational surface $X$ with log terminal singularities and an irreducible curve $C\subseteq X$, with arithmetic genus one,  
disjoint from the singular locus of $X$ and such that $C^2=0$.

The elliptic pair $(C,X)$ is a {\em minimal elliptic pair} if
it does not contain irreducible curves $E$ such that $K\cdot E < 0$ 
and $C\cdot E = 0$.
\end{definition}

Let us consider as before the blowing-up 
$\tilde C^{(2)}\to C^{(2)}$ 
at the point $p\oplus q\in\Delta_-$, with $p\neq q$, or equivalently at 
$p\oplus \sigma(p)$, where $\sigma$ is the involution exchanging $p$ and $q$.
We denote by $E$ the exceptional divisor and by $\tilde C_p\subseteq \tilde C^{(2)}$
the strict transform of the curve $C_p:=\{p\oplus r \mid r\in C\}\subseteq C^{(2)}$.
The involution $\sigma$ induces an involution on $\tilde C^{(2)}$, that we still denote 
by $\sigma$, whose ramification is the strict transform
$\tilde\Delta_-\subseteq \tilde C^{(2)}$ of $\Delta_-$. 
We denote by $\tilde\phi\colon \tilde C^{(2)}\to \tilde X := \tilde C^{(2)}/\langle\sigma\rangle$ 
the quotient morphism. 
Since the curve $\tilde\phi(\tilde C_p)$ is isomorphic 
to $C_p$ and hence to the curve $C$, by abuse of notation 
in what follows we will simply set $C := \tilde\phi(\tilde C_p)$.


\begin{lemma}
\label{lem:ell-pair}
The pair $(C,\tilde X)$ is a minimal elliptic pair.
\end{lemma}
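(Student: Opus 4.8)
The plan is to verify the two defining conditions of Definition~\ref{def:pair} for the pair $(C,\tilde X)$ and then the minimality condition. First I would recall from Remark~\ref{q} (applied with $g=1$) that $X$ is a weak del Pezzo surface of degree $2$, hence a rational surface with log terminal (in fact du Val) singularities; since $\tilde X$ is obtained from $X$ by the weighted blow-up $\eta$ described in the proof of Theorem~\ref{thm:2}, it is again a projective rational surface with log terminal singularities. Next I would check that $C = \tilde\phi(\tilde C_p)$ has arithmetic genus one: it is isomorphic to $C_p\cong C$, which has genus $1$ by hypothesis, and it is smooth, so $p_a(C)=1$. The condition $C^2=0$ is exactly the computation carried out in the proof of Theorem~\ref{thm:2}, where one shows $C\sim(\psi\circ\eta)^*(L_p)-2\tilde\phi(E)$ with $L_p^2=1$ and the relevant intersection numbers give $C^2 = 0$ (alternatively, this is the $g=1$ case of the first diagonal entry in the intersection matrix displayed in the remark following that proof). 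Finally I would check that $C$ is disjoint from $\Sing(\tilde X)$: the singular point of $\tilde X$ introduced by $\eta$ is $\tilde\phi(x)$, the image of the isolated fixed point $x\in E$, which lies on $\tilde\phi(E)$ but not on the strict transform of $B_p$; the other singularities of $\tilde X$ are the images of the nodes $\phi(p_i\oplus p_j)$ of $X$ away from the blown-up point, and $B_p$ (being the tangent line $L_p$ upstairs, meeting $\Gamma$ only at one point) avoids these, so after the modification $C$ stays away from all of them.

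For minimality I must show there is no irreducible curve $E'\subseteq\tilde X$ with $K_{\tilde X}\cdot E' < 0$ and $C\cdot E' = 0$. The first natural step is to pin down the effective cone, or at least its $K$-negative part, on $\tilde X$. By Proposition~\ref{prop:C2X}, before blowing up, $X$ has Picard rank $2$ with effective cone generated by $C_+$ and $C_-$; after the (weighted) blow-up $\eta$, $\Num^1(\tilde X)_{\bR}$ has rank $3$, spanned for instance by pullbacks of a line, $C$, and the exceptional class $\tilde\phi(E)$. I would then use the classification of the extremal rays: the curves of negative self-intersection that are forced to be in the effective cone are $\Delta_+$, $\Delta_-$ and $\tilde\phi(E)$ (self-intersections $0$, $-2$, $-1/2$ in the $g=1$ specialization — in genus $1$, $\Delta_+^2 = 2-2g = 0$, so in fact $\Delta_-$ and $\tilde\phi(E)$ are the genuinely negative ones), while $C$ with $C^2=0$ is the remaining extremal generator once the cone is polyhedral; and since $(C,\tilde X)$ sits over the non-polyhedral case the cone need not be polyhedral, but the $K$-negative part still is, being bounded by $\Delta_\pm$ and $\tilde\phi(E)$. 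Any curve $E'$ with $K_{\tilde X}\cdot E' < 0$ must therefore have class a non-negative combination landing in that polyhedral chamber; checking $C\cdot E' = 0$ against each of the finitely many candidate extremal classes $\Delta_+,\Delta_-,\tilde\phi(E)$ — using $C\cdot\Delta_- $ (which, via the restriction computation $\mathcal O_C(C)\simeq\mathcal O_C(\sigma(p)-p)$ and $\Delta_-$ being a fiber, is positive) and $C\cdot\tilde\phi(E) = 1 \neq 0$ — rules them out.

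The main obstacle I expect is the minimality verification: one needs to control \emph{all} $K$-negative irreducible curves on $\tilde X$, not just the extremal ones, so I would phrase it as: write $K_{\tilde X} = \eta^*K_X + a\,\tilde\phi(E)$ for the appropriate discrepancy coefficient $a$ (coming from the weighted blow-up, so $a$ is a specific rational number one reads off from the $(1,2)$-type weights), then show $-K_{\tilde X}$ pairs non-negatively with $C$ and use adjunction on any putative $E'$ with $C\cdot E' = 0$, together with the Hodge index theorem on the rank-$3$ lattice, to bound $E'$ into the finite list above. Concretely: if $C\cdot E' = 0$ and $C^2 = 0$, then $E'$ and $C$ span an isotropic-or-negative-semidefinite plane, so $E'$ lies on the boundary ray through $C$ or pairs to zero with $C$ forces $E'\equiv\lambda C + (\text{negative curve orthogonal to }C)$, and the only negative curves orthogonal to $C$ are among $\Delta_-$-type fibers, all of which meet $-K_{\tilde X}$ non-negatively only when... — this case analysis is the technical heart. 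Once that is done, no such $E'$ exists and the pair is minimal, completing the proof.
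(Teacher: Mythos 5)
Your verification of the elliptic-pair conditions (rationality, $p_a(C)=1$, $C^2=0$, and $C\cap\Sing(\tilde X)=\emptyset$) is essentially the paper's argument and is fine, except for one detail: the reason the nodes of $X$ miss $B_p$ is not that ``$L_p$ meets $\Gamma$ only at one point'' --- the nodes lie over the intersection points $L_i\cap L_j$ of the branch lines, not over $\Gamma$ --- but that $\phi(p_i\oplus p_j)\in B_p=\phi(C_p)$ would force $p$ or $\sigma(p)$ to equal some $p_i$, i.e.\ $p$ to be a Weierstrass point, which is excluded since $\sigma(p)\neq p$.

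The minimality part is a genuine gap. It is left visibly unfinished, and the structural claim it rests on is false: the $K$-negative part of $\overline{\rm Eff}(\tilde X)$ is \emph{not} bounded by $\Delta_\pm$ and $\tilde\phi(E)$. By Proposition~\ref{prop:dn} there are infinitely many irreducible curves $D_n$ with $D_n^2=-1/2$ and $K_{\tilde X}\cdot D_n=-1$, accumulating on the ray of $C$, so there is no finite list of $K$-negative extremal rays to check; moreover, arguing from the shape of $\overline{\rm Eff}(\tilde X)$ at this stage is close to circular, since that shape is exactly what the paper is trying to determine. You also assert $C\cdot\Delta_->0$, but in fact $C\cdot\Delta_-=0$ on $\tilde X$: the only intersection point of $C_p$ and $\Delta_-$ in $C^{(2)}$ is $p\oplus\sigma(p)$, which is the blown-up point, so the strict transforms separate (this also follows from the classes in Proposition~\ref{prop:tildeX}). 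The correct argument is short and is what the paper does: by Proposition~\ref{prop:tildeX} one has $C\sim-K_{\tilde X}$ and $K_{\tilde X}^2=0$, whence minimality follows from \cite{CLTU}*{Lemma~3.7} --- or directly, since any irreducible curve $E'$ with $C\cdot E'=0$ satisfies $K_{\tilde X}\cdot E'=-C\cdot E'=0$, so no curve can satisfy both $K_{\tilde X}\cdot E'<0$ and $C\cdot E'=0$.
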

\begin{proof}
The rationality of $\tilde X$ follows from Remark~\ref{q}. 
From Proposition~\ref{prop:S1} we have that $X$ has $6$ ordinary 
double points, and none of them lies on 
$B_p:=\phi(C_p) = \phi(C_q)$.
Therefore they give rise to $6$ ordinary double points
of $\tilde X$, disjoint from the curve $C$.
Moreover the involution on $\tilde C^{(2)}$ has $2$ fixed points
on $E$, but only one of them is isolated. Its image is the seventh 
ordinary double point of $\tilde X$ (which does not lie on $C$).
This proves that  $(C,\tilde X)$ is an elliptic pair. By 
Proposition~\ref{prop:tildeX}
we can compute $K_{\tilde X}^2 = 0$, so that
by~\cite{CLTU}*{Lemma~3.7} we conclude that $(C,\tilde X)$ is minimal.

\end{proof}

\begin{remark}
\label{rem:geom}
Let us consider a minimal resolution
$\pi \colon Z\to \tilde X$. Since $C\subseteq 
\tilde X$ does not pass through the
singular points, we have an isomorphic 
copy of $C$ in $Z$, that we still denote by $C$. Therefore $(C,Z)$
is a smooth minimal elliptic pair and
in particular, by~\cite{CLTU}*{Theorem~3.8},
the Picard rank of $Z$ is $10$.
\end{remark}

\begin{notation}
\label{not:XZ}
Before stating our next results about $\tilde X$ and $Z$ we need to fix some notation.
First of all we are going to denote by 
$L_i\subseteq \pp^2$, $1\leq i\leq 4$ the
lines whose union is the branch locus of $X\to\pp^2$, and by $E_i \subseteq \tilde X$
and $\bar{E_i} \subseteq Z$ the strict transforms of $L_i$ on $\tilde X$ and $Z$ respectively.
By abuse of notation we denote by $E$
the image $\tilde \varphi(E)\subseteq \tilde X$ 
and by $\bar E \subseteq Z$
its strict transform. Analogously
we denote simply $\Delta_-$ the curve  $\tilde\phi(\tilde\Delta_-)\subseteq \tilde X$ 
and by $\bar \Delta_-$ its strict transform in
$Z$.
For any $1\leq i < j\leq 4$, we denote by
$\bar E_{ij}\subseteq Z$ the $(-2)$-curve
over the singular point $p_{ij} := L_i\cap L_j \in \pp^2$, while the $(-2)$-curve over the 
isolated singular point $\tilde\phi(x) \in \tilde\phi(E)\in\tilde X$ is denoted
by $\bar E'$.
Finally, for any $(i,j,k)$ in $\{(1,2,3),(1,3,2),
(2,3,1)\}$,
we denote by $L_{(ij)(k4)}\subseteq 
\pp^2$ the line through  $p_{ij}$ and $p_{k4}$. 
Observe that over any of these $L_{(ij)(k4)}$ we
have two irreducible curves in $X$, say
$E_{(ij)(k4)}$ and $E'_{(ij)(k4)}$. We use
the same notation for their strict transforms
on $\tilde X$, while we denote by 
$\bar E_{(ij)(k4)}$ and $\bar E'_{(ij)(k4)}$
their strict transforms on $Z$.

We now recall that $\Gamma =
V(x_3^2-4x_1x_2) \subseteq \pp^2$, so that 
$\Delta_- = V(x_4 +
2x_1x_3 - 2x_2x_3)$
(it corresponds to one of
the two irreducible 
components over $\Gamma$).
Moreover we can 
fix the tangent lines
$L_1,\dots, L_4$ to be
$V(x_1),\, V(x_2), \, V(x_1+x_2-x_3)$
and $V(x_1+x_2+x_3)$ respectively.
Then, by Proposition~\ref{prop:S1}, 
\[
X = V(x_4^2 - x_1x_2(x_1+x_2-x_3)(x_1+x_2+x_3)) \subseteq\pp(1,1,1,2),
\]
and 
$L_{(12)(34)} = V(x_1+x_2),\,
L_{(13)(24)} = V(x_1-x_2+x_3),\,
L_{(23)(14)} = V(x_1-x_2-x_3)$. 
From these equations one can see
that the $6$ curves $E_{(ij)(kl)}$ and
$E'_{(ij)(kl)}$ form a hexagon, and 
we can choose the labels in order to have
\begin{align*}
    E_{(12)(34)} & = 
    V(x_1 + x_2, x_4 + x_2x_3),\\
    E_{(13)(24)} & = 
    V(x_1 - x_2 + x_3, 
    x_4 + 2x_2^2 - 2x_2x_3),\\
    E_{(23)(14)} & = 
    V(x_1 - x_2 - x_3, 
    x_4 - 2x_2^2 -
    2x_2x_3).
\end{align*}
It is now straightforward 
to check that these $3$
curves are disjoint and 
do not meet $\Delta_-$, 
so that the same holds for
the strict transforms on 
$\tilde X$ and on $Z$
(analogously, $E_{(12)(34)}',\ E_{(13)(24)}'$
and $E_{(23)(14)}'$ are disjoint and do not
meet $\Delta_+$).

In Picture~\ref{fig:intZ} we represent the intersection 
products of the negative curves described before. The red dots
are the $(-2)$-curves while the blue dots are the $(-1)$-curves.
When two dots are connected, the two corresponding curves
have intersection product $1$, otherwise their product is $0$.

\end{notation}

\begin{remark}
The lattice $C^{\perp}$ in
$\Pic(Z)$ is isomorphic to 
$\tilde{\mathbb E}_8$. Since the eight $(-2)$-curves 
described above are all disjoint, their classes span the sublattice 
$\mathbb A_1^8\subseteq \tilde{\mathbb E}_8$.
\end{remark}

\begin{figure}
\begin{center}
\begin{tikzpicture}[scale=1.5,x={(5mm,0mm)},z={(0mm,5mm)},y={(0mm,2mm)},
main node/.style={circle,fill=red!70,draw,minimum size=5pt,inner sep=1pt},
0 node/.style={circle,draw,minimum size=5pt,inner sep=1pt},
1 node/.style={circle,fill=blue!70,draw,minimum size=5pt,inner sep=1pt}]

\node[main node] (1) at (2,0,0) {} 
node[below left] at (2,0,0) {\tiny $\bar E_{12}$};
\node[main node] (2) at (6,0,0) {}
node[below right] at (6,0,0) {\tiny $\bar E_{13}$};
\node[main node] (3) at (8,4,0) {}
node[right] at (8,4,0) {\tiny $\bar E_{24}$};
\node[main node] (4) at (6,8,0) {}
node[above right] at (6,8,0) {\tiny $\bar E_{23}$};
\node[main node] (5) at (2,8,0) {}
node[above left ] at (2,8,0) {\tiny $\bar E_{14}$};
\node[main node] (6) at (0,4,0) {}
node[left] at (0,4,0) {\tiny $\bar E_{34}$};
\node[main node] (14) at (5,4,6) {}
node[above left] at (5,4,6) {\tiny $\bar \Delta_-$};
\node[main node] (15) at (9,4,6) {}
node[above right] at (9,4,6) {\tiny $\bar E'$};

\path[every node/.style={font=\sffamily\small}, dashed]
(1) edge (2)
(2) edge (3)
(3) edge (4)
(4) edge (5)
(5) edge (6)
(6) edge (1)
(14) edge (15);

\node[1 node] (7) at (4,0,0) {}
node[below] at (4,0,-0.1) {\tiny $\bar E_1$};
\node[1 node] (8) at (7,2,0) {}
node[below right] at (7,2,0) {\tiny $\bar E_{(13)(24)}$};
\node[1 node] (9) at (7,6,0) {}
node[above right] at (7,6,0) {\tiny $\bar E_2$};
\node[1 node] (10) at (4,8,0) {}
node[above] at (4,8,0) {\tiny $\bar E_{(23)(14)}$};
\node[1 node] (11) at (1,6,0) {}
node[above left] at (1,6,0) {\tiny $\bar E_4$};
\node[1 node] (12) at (1,2,0) {}
node[below left] at (1,2,0) {\tiny $\bar E_{(12)(34)}$};
\node[1 node] (13) at (4,4,0) {}
node[below left] at (4.2,4,0) {\tiny $\bar E_3$};
\node[1 node] (16) at (7,4,6) {}
node[above] at (7,4,6) {\tiny $\bar E$};

\path[every node/.style={font=\sffamily\small}, dashed]

(13) edge (2)
(13) edge (4)
(13) edge (6)

(1) edge (9)
(3) edge (11)
(5) edge (7)

(7) edge (14)
(9) edge (14)
(11) edge (14)
(13) edge (14);




\end{tikzpicture}
\end{center}
\caption{Intersection graph on $Z$} 

\label{fig:intZ}
\end{figure}
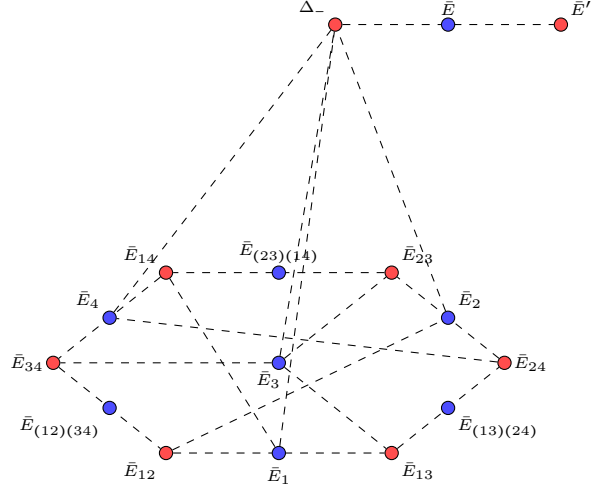

\begin{proposition}
 \label{prop:tildeX}
On $\tilde X$ the following hold.
\begin{enumerate}
\item
${\rm Cl}(\tilde X)\simeq\mathbb Z^3\oplus(\mathbb Z/2\mathbb Z)^2$.
\item
${\rm Cl}(\tilde X)_{\rm free}$ is generated by $E_1$, $E_{(12)(34)}$
and $E$, which have the following 
intersection matrix
\[
 \begin{pmatrix}
  1/2 & 1/2 & 0\\
  1/2 & 0 & 0\\
   0  & 0 & -1/2
 \end{pmatrix}.
\]
\item
$C\sim -K_{\tilde X}\sim 2E_1 - 2E$.
\item
$\Delta_-\sim 2E_{(12)(34)}-2E$. 
\end{enumerate}
\end{proposition}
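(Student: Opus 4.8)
\noindent\textit{Overall strategy.} I would read everything off the minimal resolution $\pi\colon Z\to\tilde X$ of Remark~\ref{rem:geom}, using that $\tilde X$ is $\mathbb Q$-factorial with only $A_1$-singularities (hence Gorenstein, so $K_{\tilde X}$ is Cartier). The curves contracted by $\pi$ are the seven pairwise disjoint $(-2)$-curves $\bar E_{12},\dots,\bar E_{34},\bar E'$ of Notation~\ref{not:XZ}, one over each singular point of $\tilde X$ (Lemma~\ref{lem:ell-pair}), so push-forward of divisors gives
\[
0\longrightarrow \langle\bar E_{12},\dots,\bar E_{34},\bar E'\rangle\longrightarrow \Pic(Z)\longrightarrow \Cl(\tilde X)\longrightarrow 0 .
\]
Since $Z$ is a smooth rational surface with $\rho(Z)=10$, $\Pic(Z)$ is unimodular of signature $(1,9)$ and the seven contracted classes span a copy of $A_1^{7}$. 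Thus $\Cl(\tilde X)$ has free rank $3$, and its torsion equals $\overline{A_1^{7}}/A_1^{7}$ ($\overline{A_1^{7}}$ the saturation), which is $(\mathbb Z/2\mathbb Z)^{k}$ for some $k\le 3$ because every class of that quotient has order $2$.

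\noindent\textit{Items (1) and (2).} The strict transforms on $Z$ of $E_1,E_{(12)(34)},E$ are the $(-1)$-curves $\bar E_1,\bar E_{(12)(34)},\bar E$ of Figure~\ref{fig:intZ}, and, correcting for the $A_1$-points these curves pass through,
\begin{align*}
\pi^{*}E_1 &=\bar E_1+\tfrac12(\bar E_{12}+\bar E_{13}+\bar E_{14}),\\
\pi^{*}E_{(12)(34)} &=\bar E_{(12)(34)}+\tfrac12(\bar E_{12}+\bar E_{34}),\\
\pi^{*}E &=\bar E+\tfrac12\bar E'.
\end{align*}
Reading the intersection graph and using the projection formula one recovers the matrix of (2); its determinant is $\tfrac18\ne0$, so the three classes are $\mathbb R$-independent. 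For the torsion I would pin $k=2$: glue vectors of $\overline{A_1^{7}}$ arise from the relations obtained by pulling back two distinct lines through a node $p_{ij}$ and using $\psi^{*}L_i=2E_i$ in $\Cl(X)$ (the $E_i$ are ramification curves, Proposition~\ref{prop:S1}), and one checks that exactly two of these are independent. Conceptually this is dictated by $\tilde C^{(2)}$ being the blow-up at one point of the ruled surface $C^{(2)}\to JC$: then $\Cl(\tilde C^{(2)})\cong\mathbb Z^{3}\oplus\Pic^{0}(C)$ with $\sigma$ acting as $[-1]$ on $\Pic^{0}(C)$, so the $\sigma$-invariants carry $\Pic^{0}(C)[2]\cong(\mathbb Z/2\mathbb Z)^{2}$. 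Knowing $|\Cl(\tilde X)_{\mathrm{tors}}|=4$ forces $\operatorname{disc}\Cl(\tilde X)_{\mathrm{free}}=\tfrac18$ (by unimodularity of $\Pic(Z)$ and $|\operatorname{disc}A_1^{7}|=2^{7}$), and comparison with the determinant $\tfrac18$ shows that $E_1,E_{(12)(34)},E$ generate $\Cl(\tilde X)_{\mathrm{free}}$, giving (1) and (2).

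\noindent\textit{Items (3) and (4).} By equation~\eqref{eq:C} and the description of $\eta$ in Section~\ref{sec:proof}, $C\sim(\psi\circ\eta)^{*}L_p-2E$ (the coefficient $2$ because $C\cdot E=1$, $E^{2}=-\tfrac12$); since $\eta$ is an isomorphism near $E_1$ one has $(\psi\circ\eta)^{*}L_p=\eta^{*}\psi^{*}L_1=2E_1$, whence $C\sim 2E_1-2E$. To identify this with $-K_{\tilde X}$, compute $K_{\tilde X}$ via the crepant identity $K_Z=\pi^{*}K_{\tilde X}$ from the planar model of $Z$; one gets $-K_{\tilde X}\sim 2E_1-2E\sim C$. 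Likewise $\Delta_-$ is the $\eta$-strict transform of the component $C_-\subseteq X$ of $\psi^{-1}(\Gamma)$ (Remark~\ref{rem:tan}), so $\Delta_-\sim\eta^{*}C_- -2E$; verifying $C_-\sim 2E_{(12)(34)}$ in $\Cl(X)$ directly from the equations of Notation~\ref{not:XZ} yields $\Delta_-\sim 2E_{(12)(34)}-2E$.

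\noindent\textit{Main difficulty.} The single non-formal point is showing $|\Cl(\tilde X)_{\mathrm{tors}}|$ is exactly $4$: lattice-theoretically this is the computation of the saturation of $A_1^{7}$ inside $\Pic(Z)$ (the $\mathbb F_2$-code of admissible half-integral combinations of the seven $(-2)$-curves), and along the elliptic-curve route it needs the integral identification $\Cl(\tilde X)\cong\Cl(\tilde C^{(2)})^{\sigma}$, which is delicate because $\tilde\phi$ is ramified (along $\tilde\Delta_-$). Once that is settled, the discriminant count above makes $E_1,E_{(12)(34)},E$ an honest $\mathbb Z$-basis, and all the stated linear equivalences follow.
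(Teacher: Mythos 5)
Your proposal is essentially correct and, for items (1)--(3), follows the same architecture as the paper: resolve by $\pi\colon Z\to\tilde X$, use that the seven contracted curves span $A_1^{7}$ inside the unimodular rank-$10$ lattice $\Pic(Z)$ to get free rank $3$ and $2$-elementary torsion, extract the torsion from the relations $2E_i\sim\psi^{*}L_i$, and get $C\sim 2E_1-2E$ from~\eqref{eq:C}. Where you genuinely diverge: (a) you prove that $E_1,E_{(12)(34)},E$ generate $\Cl(\tilde X)_{\rm free}$ by a discriminant count ($[\overline{A_1^7}:A_1^7]=4$ forces $|{\operatorname{disc}}\,\Cl(\tilde X)_{\rm free}|=\tfrac18$, matching the Gram determinant), whereas the paper gets generation for free from its explicit list of generators of $\Cl(\tilde X)$ — your argument is a clean alternative and your pullback formulas do reproduce the stated matrix; (b) for $K_{\tilde X}$ you invoke the planar model of $Z$ (Remark~\ref{rem:plane}) instead of the ramification formula for $\psi\colon X\to\pp^2$ followed by $K_{\tilde X}=\eta^{*}K_X+2E$ — workable but a heavier dependency; (c) for item (4) you propose an equation-level verification of $C_-\sim 2E_{(12)(34)}$ in $\Cl(X)$, while the paper argues on $Z$ that $2\bar E_{(12)(34)}+\bar E_{12}+\bar E_{34}$ and $\bar\Delta_-+2\bar E+\bar E'$ are isotropic, orthogonal and primitive, hence equal by Hodge index, and pushes forward. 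The Hodge-index route is shorter; your route does work (e.g.\ via the identity $(x_4+\tfrac12x_3(x_1-x_2))(x_4-\tfrac12x_3(x_1-x_2))=-\tfrac14(x_1+x_2)^2(x_3^2-4x_1x_2)$ on $X$, which exhibits $C_-+2E'_{(12)(34)}$ as a weighted-degree-$2$ section), but it is a genuine computation, not a one-line check.

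Two soft spots to tighten. First, your bound $k\le 3$ does not follow from "every class has order $2$" (that only gives $k\le 7$); it follows from $2^{2k}\mid\operatorname{disc}(A_1^7)=2^7$. This is harmless since you pin $k=2$ anyway, but the stated reason is wrong. Second, the lower bound $k\ge 2$ is the only non-formal point of (1) and you leave it at "one checks that exactly two of these are independent"; the invariant-theoretic heuristic via $\Pic^{0}(C)[2]$ cannot substitute for it (as you note, $\tilde\phi$ is ramified), so you do need to verify that, say, $E_1-E_2$ and $E_1-E_3$ generate a group of order $4$ in $\Cl(\tilde X)$ — the paper is equally terse here, so this is a shared burden rather than an error, but in your write-up it is the step carrying the whole discriminant argument of (2) and should be made explicit.
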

\begin{proof}
We prove $(1)$.
First of all observe that the Picard rank of $\tilde X$ is
$3$ because we are contracting seven $(-2)$-curves
of $Z$, which has Picard 
rank $10$ (see Remark~\ref{rem:geom}).
Moreover the torsion part is of the form 
$(\mathbb Z/2\mathbb Z)^s$, for some $0\leq s\leq 7$,
because the singularities of $\tilde X$ are ordinary
double points.
A basis for the Picard group of $Z$ consists
of the classes of the following curves:
$\bar E_{12}$, $\bar E_{13}$, $\bar E_{14}$, 
$\bar E_1$, $\bar E_2$, $\bar E_3$, $\bar E_4$,
$\bar E_{(12)(34)}$, $\bar E'$, $\bar E$ 
because the corresponding 
intersection matrix is unimodular of rank $10$.
This implies that ${\rm Cl}(\tilde X)$ is generated by 
their images, i.e. $E_1$, $E_2$, $E_3$, $E_4$, $E_{(12)(34)}$, $E$ 
(recall Notation~\ref{not:XZ}).
Observe that for any $i\neq j$, the class of $E_i-E_j$ is $2$-torsion
because $2E_i$ is pullback of a line of $\mathbb P^2$.
Since the class of $E_1+E_2+E_3+E_4$ is
linearly equivalent to the pullback of a 
conic, it is divisible by $2$, so that also the class
of 
\[
 (E_2-E_1)+(E_2-E_3)+(E_2-E_4) 
 = 4E_2-(E_1+E_2+E_3+E_4)
\]
is divisible by 
$2$, and in particular it
is trivial. Therefore 
the class of $E_4$ is not needed to generate
${\rm Cl}(\tilde X)$ and thus $s\leq 2$.
On the other hand $E_1-E_2\neq E_1-E_3$
so that $s=2$.

We prove $(2)$. Since $E$ is disjoint from
$E_1$ and $E_{(12)(34)}$, we have 
$E_1\cdot E = E_{(12)(34)}\cdot E = 0$.
The self-intersection of $E_1$
is $1/2$ because $2E_1$ is the pullback
of a line. The self-intersection of 
$E_{(12)(34)}$ is $0$ because its pullback
in $Z$ is $1/2E_{12}+E_{(12)(34)}+1/2E_{34}$,
which has self-intersection $0$.
Similarly one shows that $E^2=-1/2$ and that
$E_{(12)(34)}\cdot E_1 = 1/2$.

We prove $(3)$. The
equivalence $C\sim 2E_1 - 2E$ 
follows from equation~\eqref{eq:C}, 
since $L_p\subseteq \pp^2$ is a line
tangent to $\Gamma$. 
By the ramification formula,
\[
 K_X = \psi^*K_{\mathbb P^2}+R 
 = 
 \psi^*(-3L)+\psi^*(2L) = -\psi^*(L)
 = 
 -2\eta(E_1).
\]
Recall that the map 
$\eta\colon\tilde X\to X$ is obtained by 
blowing up twice (one time on the exceptional
divisor) and then contracting the $(-2)$-curve.
The contraction is crepant so that it does
not affect the canonical class. From this we
conclude that
\[
 K_{\tilde X} = \eta^*K_X + 2E
 = -2E_1 + 2E.
\]
In order to prove (4) observe that on $Z$
the divisors $2\bar E_{(12)(34)}+
\bar E_{34}+\bar E_{12}$ 
and $\bar\Delta_-+2\bar E+\bar E'$
have both self-intersection $0$ and 
their intersection product is $0$.
By the Hodge index theorem it follows that
the classes of these divisors must be
proportional. Since both classes have 
intersection product $1$ with some curve,
they are primitive
in ${\rm Pic}(Z)$. 
It follows that the two classes are equal, and 
one concludes by taking pushforward
of these classes via $\pi\colon Z\to\tilde X$.

\end{proof}

\begin{theorem}
\label{thm:3}
With the notation above, the following are equivalent:
\begin{enumerate}
\item
${\rm Eff}(\tilde C^{(2)})$ is rational polyhedral;
\item
${\rm Eff}(\tilde X)$ is rational polyhedral;
\item
${\rm Eff}(Z)$ is rational polyhedral;
\item
the class of $q - p$ has order $m <\infty$ 
in ${\rm Pic}^0(C)$;
\item
$\dim |-mK_Z| = 1$ and $\dim |-rK_Z| = 0$, for
$0\leq r < m$.
\end{enumerate}
\end{theorem}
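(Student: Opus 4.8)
The equivalences split naturally into two groups. The first group, $(1)\Leftrightarrow(2)\Leftrightarrow(3)$, concerns transferring the polyhedrality property along the morphisms $\tilde C^{(2)}\to\tilde X$ and $Z\to\tilde X$. For $(1)\Leftrightarrow(2)$ I would invoke Proposition~\ref{prop:eff1} applied to the double cover $\tilde\phi\colon\tilde C^{(2)}\to\tilde X$: one checks that $\varrho(\tilde C^{(2)}) = \varrho(\tilde X) = 3$ (the Picard rank of $\tilde C^{(2)}$ is $2+1=3$ after one blow-up of $C^{(2)}$, which has rank $2$ by Proposition~\ref{prop:C2X} in the $g=1$ case; the rank of $\tilde X$ is $3$ by Proposition~\ref{prop:tildeX}(1)), so $\tilde\phi_*$ is an isomorphism of N\'eron--Severi spaces taking $\Eff(\tilde C^{(2)})$ onto $\Eff(\tilde X)$, whence one cone is polyhedral iff the other is. For $(2)\Leftrightarrow(3)$, since $\pi\colon Z\to\tilde X$ contracts the seven disjoint $(-2)$-curves listed in Notation~\ref{not:XZ}, $\Eff(Z)$ is the cone generated by $\pi^*\Eff(\tilde X)$ together with those seven curve classes; a cone obtained by adjoining finitely many rays to a polyhedral cone is polyhedral, and conversely the image $\pi_*\Eff(Z)=\Eff(\tilde X)$ is a linear projection of a polyhedral cone, hence polyhedral. (One should note here that the seven exceptional classes are extremal negative rays, so polyhedrality of $\Eff(Z)$ forces polyhedrality of the complementary face $\pi^*\Eff(\tilde X)$.)

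For $(2)\Leftrightarrow(4)$: if $q-p$ is non-torsion, then by Theorem~\ref{thm:2} (applied with the involution $\sigma$ exchanging $p$ and $q$, which exists by Remark~\ref{rem:int}) the cone $\barEff(\tilde X)$ is non-polyhedral, giving $(2)\Rightarrow(4)$. For the converse $(4)\Rightarrow(2)$ I would use the theory of minimal elliptic pairs: by Lemma~\ref{lem:ell-pair}, $(C,\tilde X)$ is a minimal elliptic pair, and by Proposition~\ref{prop:tildeX}(3) the restriction $\mathcal O_C(C) = \mathcal O_C(-K_{\tilde X})$ has the same class as $\sigma(p)-p = q-p$ (up to sign) in $\Pic^0(C)$, so it is torsion of order exactly $m$. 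Then I would appeal to~\cite{CLTU}*{\S 3} (the classification of minimal elliptic pairs, or the direct argument that when the normal bundle of the elliptic curve is torsion the surface admits an elliptic fibration and hence has finitely generated, polyhedral effective cone) to conclude $\Eff(\tilde X)$ is polyhedral.

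For $(4)\Leftrightarrow(5)$: this is the most computational part, relating the torsion order of $q-p$ to the dimensions of anticanonical subsystems on $Z$. The key identity is that $\pi^*(-K_{\tilde X}) = -K_Z$ modulo the contracted $(-2)$-curves (the resolution is crepant, the singularities being ordinary double points), and that $C\in|-K_Z|$ with $C^2=0$ and $\mathcal O_C(-K_Z)=\mathcal O_C(q-p)$ of order $m$. A standard cohomology computation on the elliptic pair $(C,Z)$, using the exact sequence $0\to\mathcal O_Z(-rK_Z - C)\to\mathcal O_Z(-rK_Z)\to\mathcal O_C((r-1)(q-p))\to 0$ together with $\mathcal O_Z(-rK_Z-C)=\mathcal O_Z(-(r-1)K_Z)$ and induction, shows that $h^0(-rK_Z)$ stays $1$ exactly until $r=m$, at which point the class $\mathcal O_C(m(q-p))=\mathcal O_C$ becomes trivial and a new section appears, giving $\dim|-mK_Z|=1$; this is precisely the criterion in~\cite{CLTU}*{Theorem~3.8} characterizing when a smooth minimal elliptic pair has polyhedral effective cone, which also re-proves $(5)\Leftrightarrow(3)$.

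The main obstacle I anticipate is verifying carefully that the Picard ranks match in the application of Proposition~\ref{prop:eff1} and that the correspondence between $\mathcal O_C(-K_{\tilde X})$ and the divisor class $q-p$ is exactly right (including signs and the identification of $C$ with the strict transform of $C_p$ via the isomorphisms of Remark~\ref{rem:int} and Notation~\ref{not:XZ}); the cohomological bookkeeping in $(4)\Leftrightarrow(5)$ is routine once the elliptic-pair framework of~\cite{CLTU} is in place, and indeed much of it can be quoted rather than reproved.
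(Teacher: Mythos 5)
Your decomposition of the five equivalences and the tools you reach for ((1)$\Leftrightarrow$(2) via Proposition~\ref{prop:eff1} after matching Picard ranks, the resolution $\pi\colon Z\to\tilde X$ for (2)$\Leftrightarrow$(3), the elliptic-pair machinery of \cite{CLTU} for the link with (4), and $C\sim -K_Z$ with the restriction sequence for (4)$\Leftrightarrow$(5)) coincide with the paper's architecture, and the (1)$\Leftrightarrow$(2) and (4)$\Leftrightarrow$(5) steps are essentially correct (modulo an off-by-one: the restriction of $-rK_Z$ to $C$ is $\mathcal O_C(r(q-p))$, not $(r-1)(q-p)$, since $-rK_Z\sim rC$; the induction still yields $h^0(-rK_Z)=1+\lfloor r/m\rfloor$). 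However, two of your intermediate claims are false as stated. First, $\Eff(Z)$ is \emph{not} ``the cone generated by $\pi^*\Eff(\tilde X)$ together with the seven exceptional classes'': an irreducible non-exceptional curve on $Z$ has class equal to the pullback of its image \emph{minus} a non-negative combination of exceptional classes (already on $\Bl_{pt}\mathbb P^2$ one has $[H-E]\notin{\rm Cone}(H,E)$), so ``adjoining finitely many rays to a polyhedral cone'' does not describe $\Eff(Z)$, and $\pi^*\Eff(\tilde X)$ is not a face of it. The pushforward direction (3)$\Rightarrow$(2) is fine, but the converse needs a genuine argument; the paper outsources exactly this to \cite{CLTU}*{Lemma~3.14}, using that the singularities of $\tilde X$ are Du Val.

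The more serious gap is in (4)$\Rightarrow$(2)/(3). Your proposed ``direct argument'' --- torsion normal bundle gives an elliptic fibration, hence a polyhedral effective cone --- is wrong: a general rational elliptic surface has infinitely many $(-1)$-sections and non-polyhedral effective cone. What is needed is that the fibration $|-mK_Z|$ is \emph{extremal}, equivalently (in the language of \cite{CLTU}*{$\S 3$}) that the rank-$9$ lattice $C^\perp\simeq\tilde{\mathbb E}_8$ in $\Pic(Z)$ is spanned up to finite index by $\ker(\rd\colon C^\perp\to{\rm Pic}^0(C))$. The torsion of $\rd(C)=q-p$ only contributes the rank-one piece $mC$; the remaining rank $8$ is supplied by the eight disjoint $(-2)$-curves of Remark~\ref{rem:geom}, which lie in $\ker(\rd)$ precisely because they are disjoint from $C$. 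This input is the crux of the paper's proof of (3)$\Leftrightarrow$(4) and is absent from your sketch; without it, knowing only that $\mathcal O_C(C)$ is torsion does not let you invoke the \cite{CLTU} criterion.
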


\begin{proof}
By Proposition~\ref{prop:C2X}, $\rho(\tilde X) =
\rho(\tilde C^2)$, so that the equivalence (1) $\Leftrightarrow$
(2) follows from
Proposition~\ref{prop:eff1}. 
Since $\tilde X$ has Du Val singularities,
the equivalence (2) $\Leftrightarrow$ (3) 
was proved in~\cite{CLTU}*{Lemma~3.14}.
We now prove the equivalence of (3) and (4).
Since $(C,Z)$ is an elliptic
pair, by~\cite{CLTU}*{$\S 3$}, the effective cone of $Z$ 
is rational polyhedral if and only if
$C^\perp$ is generated by the kernel of 
$\rd \colon C^\perp\to {\rm Pic}^0(C)$.
In Remark~\ref{rem:geom} we have already
seen that there are eight disjoint 
$(-2)$-curves in $\ker(\rd)$.
Thus $C^\perp$ is spanned by elements 
of $\ker(\rd)$ if and only if there
exists an integer $m > 1$ such that
the multiple $mC$ is in $\ker(\rd)$,
that is if $\rd(C)$ is of $m$-torsion.
We conclude by observing that   
${\rm res}(C) = q - p$ (see the proof
of Theorem~\ref{thm:2}).

Finally, from Proposition~\ref{prop:tildeX}
$C \sim -K_{\tilde X}$, and since 
the curve $C$ is disjoint from
the singular points, also on $Z$ we have
$C\sim -K_Z$. The equivalence (4)
$\Leftrightarrow$ (5) follows.
\end{proof}

We are now going to describe
the extremal rays of 
$\Eff(\tilde X)$, both when
it is polyhedral and when
it is not. We remark that
by Proposition~\ref{prop:eff1}
we can identify $\Eff(\tilde C^{(2)})$
with $\Eff(\tilde X)$.

\begin{proposition}
\label{prop:dn}
Let $m > 1$ be the order of $q - p$
and let us consider the following classes:
\begin{align*}
 D_n &:= 2n(n+1)E_1-2nE_{(12)(34)}+(1-2n^2)E,
 \quad n\in \mathbb Z_{\geq 0}.
 \end{align*}
\begin{enumerate}
 \item If $m = \infty$, then
   ${\rm Eff}(\tilde X) =
  \langle -K_{\tilde X},
  \Delta_-,D_0,D_1,\dots,D_n,\dots\rangle$.
 \item If $m < \infty$, then
  ${\rm Eff}(\tilde X) = \langle
  -K_{\tilde X},
  \Delta_-,D_0,D_1,\dots,D_{\lceil
  \frac{m}2\rceil -1},\Gamma_m\rangle$,
  where 
 \[
 \Gamma_m  :=
 mE_1 - E_{(12)(34)} + (1-m)E.
 \]

\end{enumerate}

\end{proposition}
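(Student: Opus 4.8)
The plan is to reduce everything to the surface $\tilde X$ and to compute $\overline{\mathrm{Eff}}(\tilde X)$ directly, by exhibiting all its extremal rays. By Proposition~\ref{prop:eff1} applied to the double cover $\tilde C^{(2)}\to\tilde X$ we may work on $\tilde X$ throughout. By Lemma~\ref{lem:ell-pair} and Proposition~\ref{prop:tildeX}, $(C,\tilde X)$ is a minimal elliptic pair with $C\sim -K_{\tilde X}$, $C^2=0$ and $\varrho(\tilde X)=3$; its minimal resolution $\pi\colon Z\to\tilde X$ is a smooth minimal elliptic pair with $\varrho(Z)=10$, $C\sim -K_Z$, $C^\perp\cong\tilde{\mathbb E}_8$, and the eight disjoint $(-2)$-curves of Notation~\ref{not:XZ} span a copy of $\mathbb A_1^8$ inside $\ker(\mathrm{res})$, where $\mathrm{res}\colon C^\perp\to\mathrm{Pic}^0(C)$ is the restriction map with $\mathrm{res}(C)=q-p$ (as computed in the proof of Theorem~\ref{thm:2}). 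Since $\varrho(\tilde X)=3$, the cone $\overline{\mathrm{Eff}}(\tilde X)$ is three-dimensional and, by Proposition~\ref{prop:eff2}, is determined once one knows all of its extremal rays, which are the ray of $-K_{\tilde X}$ (on the boundary of the light cone $Q$) together with rays spanned by negative curves.

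In the $\mathbb Q$-basis $-K_{\tilde X},\Delta_-,E$ of $\mathrm{Cl}(\tilde X)_{\mathbb Q}$ the intersection form is $(-K_{\tilde X})^2=0$, $\Delta_-^2=-2$, $E^2=-\tfrac12$, $(-K_{\tilde X})\cdot\Delta_-=0$, $(-K_{\tilde X})\cdot E=\Delta_-\cdot E=1$. A direct rewriting gives $D_n=n(n+1)(-K_{\tilde X})-n\Delta_-+E$ and $\Gamma_m=\tfrac12\bigl(m(-K_{\tilde X})-\Delta_-\bigr)$, whence $D_n\cdot(-K_{\tilde X})=1$, $D_n^2=-\tfrac12$, $D_0=E$, the ``parabola'' recursion $D_{n+1}+D_{n-1}=2D_n+2(-K_{\tilde X})$, and $\Gamma_m\cdot(-K_{\tilde X})=0$, $\Gamma_m^2=-\tfrac12$, $\Delta_-+2\Gamma_m\equiv -mK_{\tilde X}$. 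These classes are effective: $-K_{\tilde X}$ is represented by $C$; $\Delta_-$ is the unique $(-2)$-curve of $\tilde X$ not contracted by $\pi$; each $D_n$ is the image of a $(-1)$-curve of $Z$ meeting $-K_Z$ once, built inductively from $\bar E$ by the standard rolling of the parabola as in \cite{CLTU}*{\S3} (this works provided $q-p$ has infinite order, or finite order $>2n$); and when $m<\infty$, by Theorem~\ref{thm:3}(5) the system $|-mK_Z|$ is a pencil in which $\bar\Delta_-$, being a $(-2)$-curve, lies in a necessarily reducible fibre, whose push-forward to $\tilde X$ is forced by the relation above to be $\Delta_-+2\Gamma_m$, so $\Gamma_m$ is an effective fibre component.

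Granting effectivity, the cones in $(1)$ and $(2)$ are contained in $\overline{\mathrm{Eff}}(\tilde X)$; for the reverse inclusion we invoke Proposition~\ref{prop:eff2}, so it suffices to show that $Q$ is contained in the cone generated by the listed classes, equivalently that any class pairing non-negatively with all of them is nef. This reduces the proof to classifying the negative curves of $\tilde X$, or, via $\pi$, those of $Z$. The negative curves orthogonal to $-K_Z$ are the $(-2)$-curves, which lie in $C^\perp\cap\ker(\mathrm{res})$ — equal to $\mathbb A_1^8$ when $m=\infty$, and to the lattice generated by $\mathbb A_1^8$ together with $\bar\Gamma_m$ when $m<\infty$ — and of these only $\bar\Delta_-$ (and $\bar\Gamma_m$) is not $\pi$-exceptional. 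The negative curves meeting $-K_Z$ once are the effective $(-1)$-classes; by the dictionary of \cite{CLTU}*{\S3} they differ from $\bar E$ by elements of $C^\perp$, with effectivity governed by the residue, which along the parabola equals $n(n+1)(q-p)$. From the congruence $n(n+1)\equiv(m-1-n)(m-n)\pmod m$ one gets $D_n=D_{m-1-n}+2(2n+1-m)\Gamma_m$, so $D_n$ becomes a positive combination of earlier generators once $n\ge\lceil m/2\rceil$; hence the extremal ones are exactly $D_0,\dots,D_{\lceil m/2\rceil-1}$ when $m<\infty$ and all $D_n$ when $m=\infty$. Feeding these lists into Proposition~\ref{prop:eff2} yields $(1)$ and $(2)$, and in case $(1)$ the infinitely many distinct rays $\langle D_n\rangle$ accumulate on $\langle -K_{\tilde X}\rangle$, since $D_n^2$ and $D_n\cdot(-K_{\tilde X})$ stay bounded while $\|D_n\|\to\infty$.

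The step I expect to be hardest is precisely this last classification of negative curves: proving that the parabola orbit $\{D_n\}$ is the complete list of effective $(-1)$-classes, locating the cut-off $\lceil m/2\rceil$ in terms of the order of $q-p$, and ruling out vertical negative curves on $\tilde X$ beyond $\Delta_-$ and $\Gamma_m$. This is an arithmetic statement about the image of $\mathrm{res}\colon\tilde{\mathbb E}_8\to\mathrm{Pic}^0(C)$, and it is here that the structural results on elliptic pairs in \cite{CLTU}*{\S3} are indispensable; everything else is the linear algebra of the $3\times 3$ intersection form above.
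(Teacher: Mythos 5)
You have the right generators, and all the numerical identities check out: the rewriting $D_n=n(n+1)(-K_{\tilde X})-n\Delta_-+E$ and $\Gamma_m=\tfrac12(-mK_{\tilde X}-\Delta_-)$, the pairing $\Gamma_m\cdot D_n=\tfrac12(m-1)-n$ producing the cutoff $\lceil m/2\rceil$, and the reflection identity $D_n=D_{m-1-n}+2(2n+1-m)\Gamma_m$ (a nice observation the paper does not state). But the two load-bearing steps are exactly the ones you leave open, and they are genuine gaps rather than routine verifications.

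First, the reverse inclusion in case (1). After invoking Proposition~\ref{prop:eff2} you pivot from ``show $Q\subseteq{\rm Cone}(\text{listed classes})$'' to ``classify all negative curves of $\tilde X$ via $\rd\colon C^\perp\to{\rm Pic}^0(C)$'', and you explicitly defer that classification to unspecified results of \cite{CLTU}. That route is both unnecessary and harder than the problem: even granting that every negative curve on $Z$ is a $(-2)$-curve or a $(-1)$-curve meeting $C$ once, the effective $(-1)$-classes form a full affine $\mathbb E_8$-orbit, not just the parabola $\{R_n\}$, so you would still have to determine which of infinitely many such classes push forward to negative classes on $\tilde X$ and then control the round part of the boundary. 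The paper closes the cone directly and completely: it checks that the Gram matrices of the consecutive pairs $(-K_{\tilde X},\Delta_-)$, $(\Delta_-,D_0)$, $(D_n,D_{n+1})$ are negative semidefinite and that the rays $D_n$ accumulate on $-K_{\tilde X}$, which forces $Q\subseteq{\rm Cone}(\dots)$ with no curve classification at all. Relatedly, the irreducibility of the $D_n$, which you attribute to ``rolling of the parabola,'' is proved in the paper concretely on $Z$: $R_n:=\pi^*D_n-\tfrac12\bar E'$ satisfies $R_n^2=R_n\cdot K_Z=-1$, hence is effective by Riemann--Roch, meets every $(-2)$-curve non-negatively, and a short decomposition argument excludes $R_n=C_1+N$.

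Second, case (2). Your claim that the reducible fibre of $|-mK_Z|$ containing $\bar\Delta_-$ is ``forced by the relation $\Delta_-+2\Gamma_m\equiv-mK_{\tilde X}$'' to push forward to $\Delta_-+2\Gamma_m$ with $\Gamma_m$ an irreducible effective curve does not follow: a priori that fibre could be of any Kodaira type compatible with containing $\bar\Delta_-$ and some of the eight disjoint $(-2)$-curves, and could contribute several new components, none of class $\Gamma_m$. The paper fills this by quoting Miranda--Persson \cite{mp}: the only extremal rational elliptic surface containing eight disjoint $(-2)$-curves is $X_{11}(j)$, with exactly two fibres of type $I_0^*$, so exactly one new component, of multiplicity two, appears in the fibre through $\bar\Delta_-$, and its class is then pinned down to be $\Gamma_m$. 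Without an input of this kind your identification of $\Gamma_m$ as an effective irreducible generator is unsupported.
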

\begin{proof}
(1) If $m = \infty$, we already know
that the effective cone of 
$\tilde X$ is not polyhedral.
A direct calculation shows that $D_n^2 = -1/2$
and $D_n\cdot K_{\tilde X} = -1$,
for any $n\geq 0$.
The divisors $2E_1$ and $2E_{(12)(34)}$ are
Cartier, while $E$ is not Cartier. 
Since $\bar E'$ is the only $(-2)$-curve intersecting $\bar E$ and contracted 
by $\pi\colon Z\to \tilde X$, it follows that
\[
 R_n 
 := 
 \lfloor\pi^*D_n\rfloor
 =
 \pi^*D_n-\frac{1}{2}\bar E'
\]
is a divisor with integer coefficients.
Since $R_n^2 = R_n\cdot K_Z = -1$, 
by Riemann-Roch we conclude 
that $R_n$ is linearly
equivalent to an effective divisor.
Moreover each $R_n$ has non-negative intersection 
product with all the $(-2)$-curves since 
$R_n\cdot \bar E_{ij} = 0$, $R_n\cdot \bar E' = 1$ and
$R_n\cdot \bar\Delta_- = 2n + 1$.
We claim that $R_n$ is irreducible. 
Suppose that we can write $R_n = C_1 + N$,
where $C_1$ is an irreducible $(-1)$-curve
and $N$ is a sum of $(-2)$-curves. 
The condition $R_n^2 = -1$ implies that
either $R_n\cdot C_1 < 0$ or 
$R_n\cdot N <0$, but the latter would
imply that $R_n$ has negative intersection
with at least one $(-2)$-curve, 
a contradiction. Therefore $(C_1+N)\cdot
C_1 < 0$, so that $N\cdot C_1 = 0$, which
implies that also $N^2 = 0$. 
Since the intersection form is negative
semidefinite on the components of $N$, 
we deduce that $N$ is indeed a multiple
of $-K$. Therefore $R_n = C_1 - tK$, which
gives $R_n^2 > -1$, again a contradiction.
This proves the claim, and since
$D_n = \pi_*(R_n)$, it is
irreducible too.

Consider now the cone $\mathcal C$,
generated by 
$-K_{\tilde X},\, \Delta_-$ and $D_n$, for $n\geq 0$.
The following matrices
\[
\left(
\begin{array}{rr}
0 & 0\\
0 & -2
\end{array}
\right),
\qquad
\left(
\begin{array}{rr}
-2 & 1\\
1 & -\frac12
\end{array}
\right),
\qquad
\left(
\begin{array}{rr}
-\frac12 & \frac12\\
\frac12 & -\frac12
\end{array}
\right)
\]
give the intersection form 
on the edges $\langle -K_{\tilde X},\Delta_-\rangle,\,
\langle \Delta_-,D_0\rangle$ and 
$\langle D_n,D_{n+1}\rangle$ 
(for any $n\geq 0$) respectively.
Since they are all negative semidefinite
and the rays $D_n$ accumulate on $-K_{\tilde X}$,
we conclude that 
$\mathcal C = \Eff(\tilde X)$,
which proves (1).

Let us prove (2). Observe that if 
$m < \infty$, then $|-mK_Z|$ 
defines an elliptic fibration which is {\em 
extremal} in the sense of Miranda-Persson~\cite{mp}.
According to~\cite{mp}*{Thm. 4.1} the only 
extremal rational elliptic surface which contains
eight disjoint $(-2)$-curves is $X_{11}(j)$, which
has exactly two singular fibers of type $I_0^*$.
Thus, as soon as $-mK_Z$ moves, two new $(-2)$-curves
appear, each of which is the unique curve of 
multiplicity two in the fiber $I_0^*$.
On $\tilde X$ one of these curves is disjoint from
$\Delta_-$, so that its self-intersection is $0$, 
while the other one intersects $\Delta_-$ and
has self-intersection $-1/2$.
The class of the latter is
\[
 \Gamma_m
 :=
 \frac{1}{2}(-mK_{\tilde X}-\Delta_-)
 =
 mE_1 - E_{(12)(34)} + (1-m)E,
\]
and by the intersection matrix given in
Proposition~\ref{prop:tildeX} we have that 
$\Gamma_m\cdot D_n = 1/2(m-1) - n$, which is non-negative if and
only if $n < {\lceil\frac{m}2\rceil}$.

\end{proof}

\begin{figure}[htbp]
\hspace{-5mm}
\includegraphics[width=7cm]{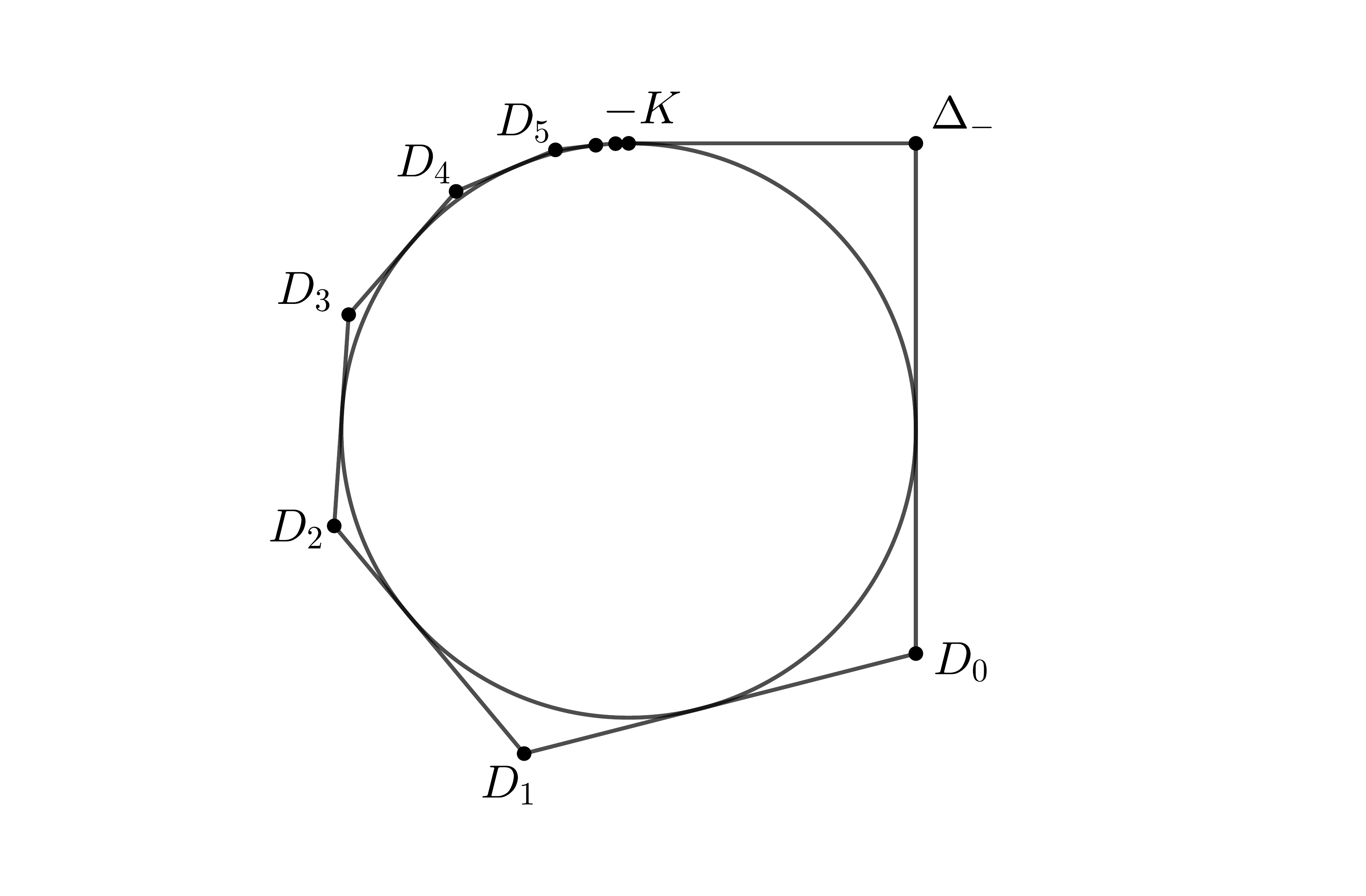}
\hspace{-2cm}
{\includegraphics[width=6.8cm]{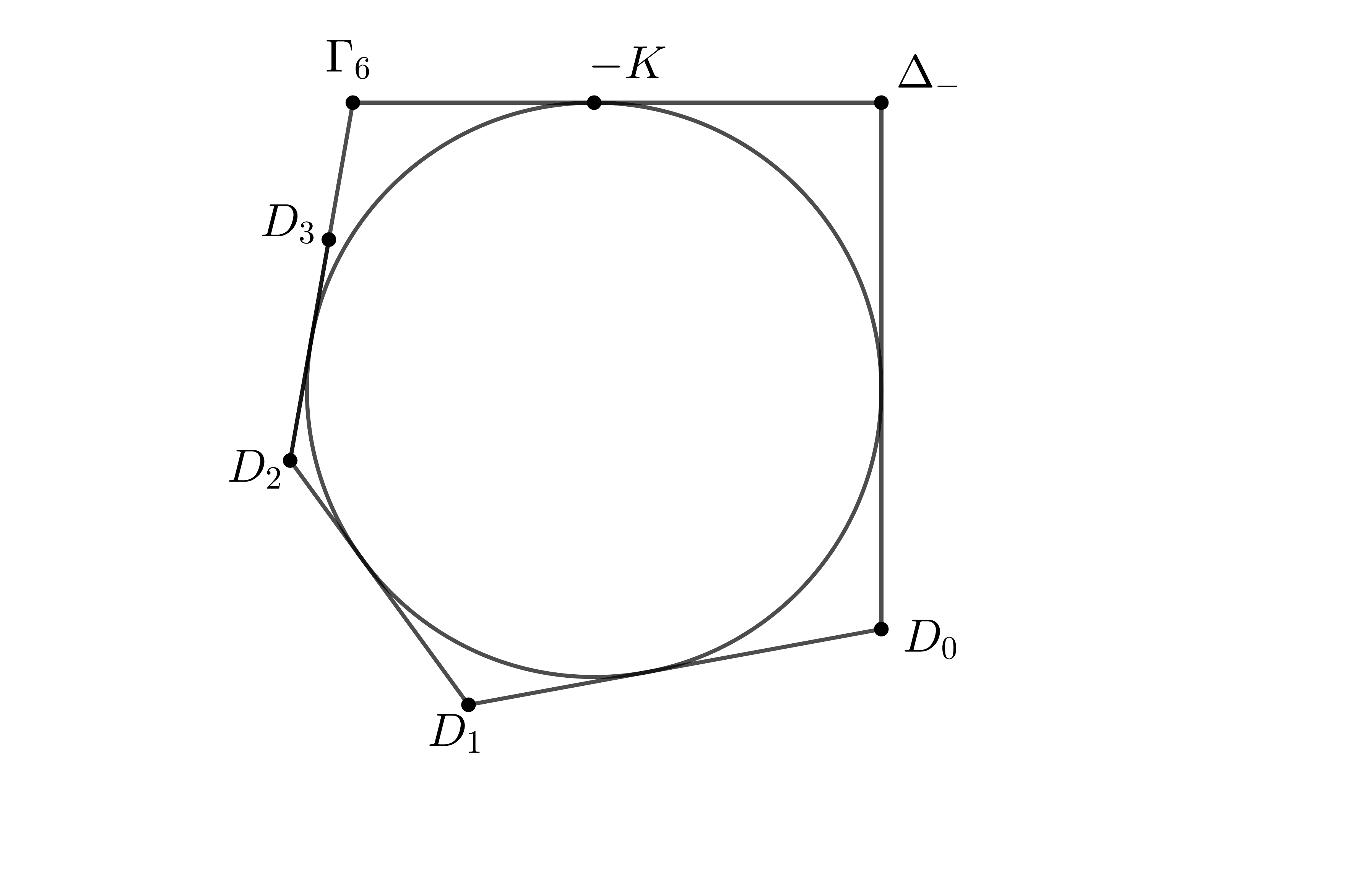}}
\hspace{-3cm}
\caption{$\Eff{\tilde X}$ for $m=\infty$ and $m = 6$} \label{fig:eff}
\end{figure}

\begin{remark}
We remark that
the irreducible negative curves on 
$\tilde C^{(2)}$, first found in~\cite{Ga},
are {\em unexpected}, meaning with this 
that the expected dimension of the linear
system is negative.
The images of these curves in $\tilde X$
are the curves $D_n$ of
Proposition~\ref{prop:dn}. 
They are still negative,
of self-intersection $-\frac{1}{2}$, 
but we have seen that the round-down 
of the pull-back $R_n = \lfloor\pi^*D_n\rfloor$ 
on $Z$ is a $(-1)$-curve, and in particular
it is expected.
\end{remark}

\begin{remark}
 \label{rem:plane}
Looking at Figure~\ref{fig:intZ}
we see that on $Z$ there are $8$ disjoint
$(-1)$-curves, namely 
$\bar E_1,\dots,\bar E_4,
\bar E_{(12)(34)},\bar E_{(13)(24)},
\bar E_{(23)(14)},\bar E$. If we contract
all of them, $\bar E'$ becomes a $(-1)$-curve as well
and if we contract also this one, we 
obtain a birational map $Z\to\pp^2$.
We denote by $q_i,\ q_{(ij)(kl)}$ and 
$q$ the images of $\bar E_i,
\ \bar E_{(ij)(kl)}$ and $\bar E'$
respectively.
The image of $\bar E_{12}$
is a line $l_{12}$, passing through $q_1, 
q_2$ and $q_{(12)(34)}$, and anagolously
$l_{13}$ and $l_{23}$.
The image of $\bar \Delta_-$ is a conic
passing through $q_1,\dots,
q_4$ and $q$. In Figure~\ref{fig:plane}
we represent the configuration of 
the points that we are blowing up
on $\pp^2$ (together with the tangent 
direction to the conic at $q$)
in order to obtain $Z$.
\begin{figure}

\begin{center}
\begin{tikzpicture}[scale=0.4,
main node/.style={circle,fill=red!70,draw,minimum size=5pt,inner sep=1pt},
0 node/.style={circle,draw,minimum size=5pt,inner sep=1pt},
1 node/.style={circle,fill=blue!70,draw,minimum size=5pt,inner sep=1pt}]

\node (1) at (7,8) {};
\node (2) at (9,8) {};
\node (3) at (-1,-8) {};
\node (4) at (1,-8) {};
\node (5) at (-6,-2) {};
\node (6) at (6,-2) {};
\node (7) at (-9,8) {};
\node (8) at (-7,8) {};
\node (9) at (0,8) {};
\node (10) at (7,0) {};
\node (11) at (-7,0) {};
\node (12) at (0,-8) {};

\path[every node/.style={font=\sffamily\small}]
(1) edge (3)
(2) edge (5)
(4) edge (8)
(6) edge (7)
(10) edge (11)
(9) edge (12)
;

\draw(0,-2) ellipse (3.5 and 4);

\node[1 node] (20) at (0,2) {}
node[above left] at (-.2,1.8) {\tiny $q_1$};

\node[1 node] (21) at (-3,0) {}
node[above left] at (-3,0) {\tiny $q_2$};

\node[1 node] (23) at (0,-6) {}
node[below right] at (0,-6) {\tiny $q_3$};

\node[1 node] (22) at (3,0) {}
node[above right] at (3,0) {\tiny $q_4$};

\node[1 node] (20) at (6,6) {}
node[below right] at (6,6) {\tiny $q_{(12)(34)}$};

\node[1 node] (21) at (-6,6) {}
node[below left] at (-6,6) {\tiny $q_{(23)(14)}$};

\node[1 node] (22) at (0,0) {}
node[below right] at (0,0) {\tiny $q_{(13)(24)}$};

\node[1 node] (24) at (3,-4) {}
node[below right] at (3,-4) {\tiny $q$};


\end{tikzpicture}
\end{center}

\caption{Planar configuration}
\label{fig:plane}
\end{figure}
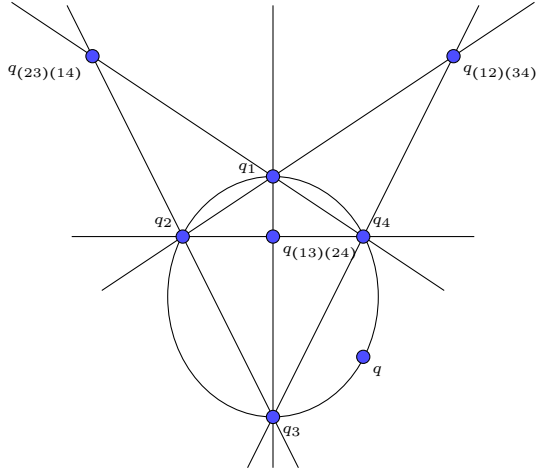

Therefore, for any $n > 0$, the curve $D_n$
appearing in
Proposition~\ref{prop:dn} corresponds to
a plane curve. We can compute that its degree
is $3n^2+n$, intersecting $R_n = \pi^*D_n-
\frac{1}{2}\bar E'$ (see the proof of
Proposition~\ref{prop:dn}) with the pull-back
of $E_{12}$, i.e. the class 
$\bar E_{12} + \bar E_1 +
\bar E_2 + \bar E_{(12)(34)}$ in $Z$.
In the same way we see that the multiplicity
in $q_1$ is $n^2$, by taking the intersection
with the pull-back of $E_1$, namely $\bar E_1
+ 1/2 (\bar E_{12} + \bar E_{13} + 
\bar E_{14})$, and the same holds for 
$q_2,q_3,q_4$. Computing the 
intersection with $\bar E_{(12)(34)} + 
1/2(\bar E_{12} + \bar E_{34})$ we have
that the multiplicity in
$q_{(12)(34)}$ is $n^2+n$, and the
same holds for $q_{(13)(24)}$ and 
$q_{(23)(14)}$. Finally, the 
curve has multiplicity $n^2$ at $q$,
and multiplicity $n^2-1$ at the point infinitely 
near to $q$, in the direction of the conic.

\end{remark}

\begin{bibdiv}
\begin{biblist}

\bib{acgh}{book}{
      author={Arbarello, E.},
      author={Cornalba, M.},
      author={Griffiths, P.~A.},
      author={Harris, J.},
       title={Geometry of algebraic curves. {V}ol. {I}},
      series={Grundlehren der mathematischen Wissenschaften [Fundamental
  Principles of Mathematical Sciences]},
   publisher={Springer-Verlag, New York},
        date={1985},
      volume={267},
        ISBN={0-387-90997-4},
         url={https://doi.org/10.1007/978-1-4757-5323-3},
      review={\MR{770932}},
}

\bib{CLTU}{misc}{
      author={Castravet, Ana-Maria},
      author={Laface, Antonio},
      author={Tevelev, Jenia},
      author={Ugaglia, Luca},
       title={Blown-up toric surfaces with non-polyhedral effective cone},
        date={2020},
        note={\href{https://arxiv.org/abs/2009.14298}{arXiv:2009.14298}},
}

\bib{CiKo}{article}{
      author={Ciliberto, Ciro},
      author={Kouvidakis, Alexis},
       title={On the symmetric product of a curve with general moduli},
        date={1999},
        ISSN={0046-5755},
     journal={Geom. Dedicata},
      volume={78},
      number={3},
       pages={327\ndash 343},
         url={https://doi.org/10.1023/A:1005280023724},
      review={\MR{1725369}},
}

\bib{CK}{article}{
      author={Cutkosky, Steven~Dale},
      author={Kurano, Kazuhiko},
       title={Asymptotic regularity of powers of ideals of points in a weighted
  projective plane},
        date={2011},
        ISSN={2156-2261},
     journal={Kyoto J. Math.},
      volume={51},
      number={1},
       pages={25\ndash 45},
         url={https://doi.org/10.1215/0023608X-2010-019},
      review={\MR{2784746}},
}

\bib{Do}{incollection}{
      author={Dolgachev, Igor},
       title={Weighted projective varieties},
        date={1982},
   booktitle={Group actions and vector fields ({V}ancouver, {B}.{C}., 1981)},
      series={Lecture Notes in Math.},
      volume={956},
   publisher={Springer, Berlin},
       pages={34\ndash 71},
         url={https://doi.org/10.1007/BFb0101508},
      review={\MR{704986}},
}

\bib{Ga}{article}{
      author={Garc\'{\i}a, Luis~Fuentes},
       title={Seshadri constants on ruled surfaces: the rational and the
  elliptic cases},
        date={2006},
        ISSN={0025-2611},
     journal={Manuscripta Math.},
      volume={119},
      number={4},
       pages={483\ndash 505},
         url={https://doi.org/10.1007/s00229-006-0629-y},
      review={\MR{2223629}},
}

\bib{gk}{article}{
      author={Gonz\'{a}lez, Jos\'{e}~Luis},
      author={Karu, Kalle},
       title={Some non-finitely generated {C}ox rings},
        date={2016},
        ISSN={0010-437X},
     journal={Compos. Math.},
      volume={152},
      number={5},
       pages={984\ndash 996},
         url={https://doi.org/10.1112/S0010437X15007745},
      review={\MR{3505645}},
}

\bib{ggk1}{article}{
      author={Gonz\'{a}lez~Anaya, Javier},
      author={Gonz\'{a}lez, Jos\'{e}~Luis},
      author={Karu, Kalle},
       title={Constructing non-{M}ori dream spaces from negative curves},
        date={2019},
        ISSN={0021-8693},
     journal={J. Algebra},
      volume={539},
       pages={118\ndash 137},
         url={https://doi.org/10.1016/j.jalgebra.2019.08.005},
      review={\MR{3995238}},
}

\bib{ggk}{article}{
      author={Gonz\'{a}lez-Anaya, Javier},
      author={Gonz\'{a}lez, Jos\'{e}~Luis},
      author={Karu, Kalle},
       title={Curves generating extremal rays in blowups of weighted projective
  planes},
        date={2021},
        ISSN={0024-6107},
     journal={J. Lond. Math. Soc. (2)},
      volume={104},
      number={3},
       pages={1342\ndash 1362},
         url={https://doi.org/10.1112/jlms.12461},
      review={\MR{4332479}},
}

\bib{hkl}{article}{
      author={Hausen, J\"{u}rgen},
      author={Keicher, Simon},
      author={Laface, Antonio},
       title={On blowing up the weighted projective plane},
        date={2018},
        ISSN={0025-5874},
     journal={Math. Z.},
      volume={290},
      number={3-4},
       pages={1339\ndash 1358},
         url={https://doi.org/10.1007/s00209-018-2065-6},
      review={\MR{3856856}},
}

\bib{LazI}{book}{
      author={Lazarsfeld, Robert},
       title={Positivity in algebraic geometry. {I}},
      series={Ergebnisse der Mathematik und ihrer Grenzgebiete. 3. Folge. A
  Series of Modern Surveys in Mathematics [Results in Mathematics and Related
  Areas. 3rd Series. A Series of Modern Surveys in Mathematics]},
   publisher={Springer-Verlag, Berlin},
        date={2004},
      volume={48},
        ISBN={3-540-22533-1},
         url={https://doi.org/10.1007/978-3-642-18808-4},
        note={Classical setting: line bundles and linear series},
      review={\MR{2095471}},
}

\bib{LazII}{book}{
      author={Lazarsfeld, Robert},
       title={Positivity in algebraic geometry. {II}},
      series={Ergebnisse der Mathematik und ihrer Grenzgebiete. 3. Folge. A
  Series of Modern Surveys in Mathematics [Results in Mathematics and Related
  Areas. 3rd Series. A Series of Modern Surveys in Mathematics]},
   publisher={Springer-Verlag, Berlin},
        date={2004},
      volume={49},
        ISBN={3-540-22534-X},
         url={https://doi.org/10.1007/978-3-642-18808-4},
        note={Positivity for vector bundles, and multiplier ideals},
      review={\MR{2095472}},
}

\bib{McG}{misc}{
      author={McGrath, G.},
       title={Seshadri constants on irrational surfaces},
        date={2018},
  note={\href{https://people.math.umass.edu/~tevelev/Greg_thesis.pdf}{Honors
  Thesis}},
}

\bib{mp}{article}{
      author={Miranda, Rick},
      author={Persson, Ulf},
       title={On extremal rational elliptic surfaces},
        date={1986},
        ISSN={0025-5874},
     journal={Math. Z.},
      volume={193},
      number={4},
       pages={537\ndash 558},
         url={https://doi.org/10.1007/BF01160474},
      review={\MR{867347}},
}

\bib{pi}{article}{
      author={Pirola, Gian~Pietro},
       title={Base number theorem for abelian varieties. {A}n infinitesimal
  approach},
        date={1988},
        ISSN={0025-5831},
     journal={Math. Ann.},
      volume={282},
      number={3},
       pages={361\ndash 368},
         url={https://doi.org/10.1007/BF01460039},
      review={\MR{967018}},
}

\end{biblist}
\end{bibdiv}

\end{document}